\definecolor{MyLinkColor}{rgb}{0,0,0.4}
\newcommand{\R}{{\mathbb R}}
\newcommand{\E}{{\mathcal E}}
\newcommand{\cK}{\mathcal{K}}
\newcommand{\cF}{\mathcal{F}}
\newcommand{\p}{\partial}
\newcommand{\e}{\varepsilon}
\newcommand{\id}{\mathop{\rm id}\nolimits}
\newtheorem{thm}{Theorem}[section]
\newtheorem{lemma}[thm]{Lemma}
\newtheorem{cor}[thm]{Corollary}
\theoremstyle{remark} 
\numberwithin{equation}{section}
\title[]{A gradient flow approach to a thin film approximation\\ of the Muskat problem}
\thanks{ Partially supported by the french-german PROCOPE program 20190SE}
\author[Ph. Lauren\c cot]{Philippe Lauren\c cot}
\address{Institut  de Math\'ematiques de Toulouse, CNRS UMR 5219, Universit\'e de Toulouse, F-31062  Toulouse cedex 9, France}
\email{laurenco@math.univ-toulouse.fr}
\author[B.--V. Matioc]{Bogdan--Vasile Matioc}
\address{Institut f\" ur Mathematik, Universit\" at Wien, Nordbergstra{\ss}e 15, 1090 Wien, {\"O}sterreich}
\email{bogdan-vasile.matioc@univie.ac.at}
\date{\today}
\subjclass[2010]{35K65, 35K40, 47J30, 35Q35}
\keywords{thin film, degenerate parabolic system, gradient flow, Wasserstein distance}
\begin{document}

\begin{abstract}
A  fully coupled system of two second-order parabolic degenerate  equations 
arising as a thin film approximation to the Muskat problem is interpreted as a gradient 
flow for the $2$-Wasserstein distance in the space of probability measures with finite second moment.
A variational scheme is then set up and is the starting point of the construction of weak solutions. 
The availability of two Liapunov functionals turns out to be a central tool to obtain the needed regularity 
to identify the Euler-Lagrange equation in the variational scheme. 
\end{abstract}

\maketitle

\section{Introduction}\label{s:intro}

The Muskat model is a free boundary problem describing the motion of two immiscible 
fluids with different densities and viscosities in a porous medium (such as intrusion of water into oil). 
Assuming that the thickness of the two fluid layers is small, a thin film approximation to the Muskat problem has been recently 
derived in \cite{EMMxx} for the space and time evolution of the thickness $f=f(t,x)\ge 0$ and $g=g(t,x)\ge 0$ of the two fluids ($f+g$ 
being then the total height of the layer) and reads
\begin{subequations} \label{eq:problem}
\begin{equation}\label{eq:S2}
\left\{
\begin{array}{rcl}
\p_t f & = & (1+R)\p_x\left(   f\p_xf\right)+R\p_x\left(f\p_xg\right),\\[1ex]
\p_t g & = & R_\mu\p_x\left(   g\p_xf\right)+R_\mu \p_x\left(g\p_xg\right),
\end{array}
\right.
\qquad (t,x)\in (0,\infty)\times \R,
\end{equation}
supplemented with the initial conditions
\begin{equation}\label{eq:bc1}
f(0)=f_0,\qquad g(0)=g_0, \quad x\in \R.
\end{equation}
\end{subequations}
Here, $R$ and $R_\mu$ are two positive real numbers depending on the densities and the viscosities of the fluids.
 Since $f$ and $g$ may vanish, \eqref{eq:S2} is a strongly coupled degenerate parabolic system with a
 full diffusion matrix due to the terms $\p_x (f \p_x g)$ and $\p_x (g \p_x f)$. 
There is however an underlying structure which results in the availability of an energy functional 
\begin{equation}
\E(f,g):=\frac{1}{2}\int_\R \left[ f^2+R(f+g)^2 \right]\, dx, \label{eq:sup1}
\end{equation}
which decreases along the flow. More precisely, a formal computation reveals that 
\begin{equation}
\frac{d}{dt}\E(f,g) = - \int_\R \left[ f\ \left( (1+R)\p_x f + R\p_x g \right)^2 + R R_\mu\ g\ \left( \p_x f + \p_x g \right)^2 \right]\, dx\,.\label{dissE2}
\end{equation}
A similar property is actually valid  when \eqref{eq:S2} is set on a bounded interval $(0,L)$ with homogeneous Neumann boundary conditions: in that setting, the stationary solutions are constants and the principle of linearized stability is used in \cite{EMMxx} to construct global classical solutions which stay in a small neighbourhood of positive constant stationary states. Local existence and uniqueness of classical solutions (with positive components) are also established in \cite{EMMxx} by using the general theory for nonlinear parabolic systems developed in \cite{Am93}. Weak solutions have been subsequently constructed in \cite{ELM11} by a compactness method: the first step is to study a regularized system in which the cross-diffusion terms are ``weakened'' and to show that it has global strong solutions, the proof combining the theory from \cite{Am93} for the local well-posedness and suitable estimates for the global existence. Some of these estimates turn out to be independent of the regularisation parameter and provide sufficient information to pass to the limit as the regularisation parameter goes to zero and obtain a weak solution to \eqref{eq:S2} in a second step. A key argument in the analysis of \cite{ELM11} was to notice that there is another Liapunov functional for 
\eqref{eq:S2} given by
\begin{equation}
\mathcal{H}(f,g) := \int_\R \left[ f\ln{f} + \frac{R}{R_\mu}\ g \ln{g} \right]\, dx\,, \label{eq:sup1b}
\end{equation}
which evolves along the flow as follows:
$$
\frac{d}{dt} \mathcal{H}(f,g) = - \int_\R \left[ |\p_x f|^2 + R\ |\p_x f + \p_x g|^2 \right]\,dx\,.
$$
The basic idea behind the above computation is to notice that an alternative formulation of \eqref{eq:S2} is
\[
\left\{
\begin{array}{rcl}
\p_t f & = & \p_x\left[ f\ \p_x \left( (1+R) f + R g \right) \right], \\[1ex]
\p_t g & = & R_\mu\ \p_x\left[ g\ \p_x\left( f + g \right) \right],
\end{array} 
\right.
\qquad (t,x)\in (0,\infty)\times \R,
\]
so that it is rather natural to multiply the $f$-equation by $\ln{f}$ and the $g$-equation by $\ln{g}$ and find nice cancellations after integrating by parts. In this note, we go one step further and observe that a concise formulation of \eqref{eq:S2} is actually
\begin{equation}\label{eq:S2w}
\left\{
\begin{array}{rcl}
\p_t f & = & \displaystyle{ \p_x\left[ f\ \p_x\left( \frac{\delta\E}{\delta f}(f,g) \right) \right]},\\[1ex]
 & & \\
\displaystyle{ \frac{R}{R_\mu}\ \p_t g} & = & \displaystyle{ \p_x\left[ g\ \p_x\left( \frac{\delta \E}{\delta g}(f,g) \right) \right]},
\end{array}
\right.
\qquad (t,x)\in (0,\infty)\times \R,
\end{equation}
which is strongly reminiscent of the interpretation of second-order parabolic equations as gradient flows with 
respect to the $2$-Wasserstein distance, see \cite[Chapter~11]{AGS08} and \cite[Chapter~8]{Vi03}.  
Indeed, since the pioneering works \cite{JKO98} on the linear Fokker-Planck equation and \cite{Ot98,Ot01} 
on the porous medium equation, several equations have been interpreted as gradient flows with respect to some 
Wasserstein metrics, including doubly degenerate parabolic equations \cite{Ag05}, a model for type-II semiconductors \cite{AS08}, 
the Smoluchowski-Poisson equation \cite{BCC08}, some kinetic equations \cite{CG04,CMV06}, and some fourth-order degenerate parabolic equations \cite{MMS09}, to give a few examples, see also \cite{AGS08} for a general approach. 
As far as we know,  the system \eqref{eq:S2w} seems to be the first example of a system of parabolic partial differential equations which can be interpreted as a gradient flow for Wasserstein metrics. Let us however mention that the parabolic-parabolic Keller-Segel system arising in the modeling of chemotaxis has a mixed Wasserstein-$L^2$ gradient flow structure \cite{CLxx}. 

The purpose of this note is then to show that the heuristic argument outlined previously can be made rigorous 
and to construct weak solutions to \eqref{eq:problem} by this approach. More precisely, let $\cK$ be the 
convex subset of the Banach space $L^1(\R, (1+x^2) dx) \cap L^2(\R)$ defined by 
\begin{equation}
\cK:=\left\{h\in L^1(\R, (1+x^2) dx) \cap L^2(\R)\,:\, h\ge 0 \text{ a.e. and } \int_\R h(x)\, dx=1\right\}, \label{eq:sup2}
\end{equation}
and consider initial data $(f_0,g_0) \in\cK_2:= \cK\times\cK.$ We next denote the set of Borel probability measures on $\R$ with 
finite second moment by $\mathcal{P}_2(\R)$ and the $2-$Wasserstein distance on $\mathcal{P}_2(\R)$ by $W_2.$ Recall that, given two 
Borel probability measures $\mu$ and $\nu$ in $\mathcal{P}_2(\R),$
\[
W_2^2(\mu,\nu):=\inf_{\pi\in\Pi(\mu,\nu)}\int_{\R^2} |x-y|^2d\pi(x,y)\,,
\]
where $\Pi(\mu,\nu)$  is the set of all probability measures $\pi\in \mathcal{P}(\R^2) $ which have marginals $\mu$ and $\nu $, 
that is $\pi[A\times \R]=\mu[A] $ and $\pi[\R\times B]=\nu[B]$ for all measurable subsets $A$ and $B$ of $\R.$ 
Alternatively, $\pi\in\Pi(\mu,\nu)$ is equivalent to 
$$
\int_{\R^2}(\phi(x)+\psi(y))\, d\pi(x,y)=\int_\R \phi(x)\, d\mu(x)+\int_\R\psi(y)\, d\nu(y) \quad\text{ for all }\quad  (\phi,\psi)\in L^1(\R;\R^2).
$$

With these notation, our result reads:

\begin{thm}\label{MT:1} Assume that $R>0,$ $R_\mu>0.$  
Given  $\tau>0$  and $(f_0,g_0)\in\cK_2,$ the sequence $(f_\tau^n,g_\tau^n)_{n\ge 0}$ obtained recursively by setting
\begin{eqnarray}
(f_\tau^0,g_\tau^0) & := & (f_0,g_0)\,, \label{eq:sup2b} \\
\cF_\tau^n\left( f_\tau^{n+1},g_\tau^{n+1} \right) & := & \inf_{(u,v)\in\cK_2}{\cF_\tau^n(u,v)}\,, \label{eq:sup2c}
\end{eqnarray}
with
\begin{equation}
\cF_\tau^n(u,v) := \frac{1}{2\tau}\ \left(W_2^2\left( u,f_\tau^n \right) + \frac{R}{R_\mu}\ W_2^2\left( v,g_\tau^n\right) \right) + \E(u,v)\,, \quad (u,v)\in\cK_2\,, \label{eq:sup2d}
\end{equation}
is well-defined. 
Introducing the interpolation $(f_\tau,g_\tau)$ defined by
\begin{equation}\label{eq:interp}
\text{$f_\tau(t):=f_\tau^n$ and $g_\tau(t):=g_\tau^n$ for $t\in [n\tau,(n+1)\tau)$ and $n\ge 0,$} 
\end{equation}
there exist a sequence  $(\tau_k)_{k\ge 1}$ of positive real numbers, $\tau_k\searrow 0$, and functions  $(f,g):[0,\infty)\to  \cK_2$ such that
\begin{equation}\label{T1} 
\text{$(f_{\tau_k}, g_{\tau_k})\longrightarrow (f,g)$ in $L^2((0,T)\times\R;\R^2)$ for all $T>0.$ }
\end{equation}
Moreover, 
\begin{itemize}
\item[$(i)$] $(f,g)\in L^\infty(0,\infty; L^2(\R;\R^2))$, $(\p_x f,\p_x g) \in L^2(0,t;H^1(\R;\R^2))$,
\item[$(ii)$] $(f,g)\in C ([0,\infty);H^{-3}(\R;\R^2))$ with $(f,g)(0)=(f_0,g_0),$
\end{itemize}
and the pair $(f,g)$ is a weak solution of \eqref{eq:problem} in the sense that 
\begin{equation}\label{T2}   
\left\{
\begin{aligned}
&\int_\R f(t)\ \xi\, dx-\int_\R f_0\ \xi\, dx + \int_0^t\int_\R f(\sigma) \left[ (1+R)\p_xf+R\p_xg\right](\sigma) \p_x\xi\, dx\, d\sigma=0\,,\\[1ex]
&\int_\R g(t)\ \xi\, dx - \int_\R g_0\ \xi\, dx+ R_\mu\ \int_0^t \int_\R g(\sigma) \left( \p_xf+ \p_xg \right)(\sigma)\p_x\xi\, dx\, d\sigma=0\,,
\end{aligned}\right.
\end{equation}
for all $\xi\in C_0^\infty(\R) $ and $t\ge 0.$
 In addition, $(f,g)$ satisfy the following estimates
\begin{align*}
(a) \quad & \mathcal{H}(f(T),g(T)) +\int_0^T\int_\R\left[  |\p_xf|^2 + R |\p_x(f+g)|^2 \right]\, dx\, dt\leq  \mathcal{H}(f_0,g_0)\,,\\[1ex]
(b)\quad &\E(f(T), g(T))
+ \frac{1}{2}\int_{0}^T\int_\R\left[f\left((1+R)\p_xf+R\p_x g\right)^2+RR_\mu g(\p_xf+\p_x g)^2\right]\, dx\,dt \leq \E(f_{0},g_{0})\,,
\end{align*}
 for a.e. $T\in(0,\infty)$, $\E$ and $\mathcal{H}$ being the functionals defined by \eqref{eq:sup1} and \eqref{eq:sup1b}, respectively.
\end{thm}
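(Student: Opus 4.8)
The plan is to read the scheme \eqref{eq:sup2b}--\eqref{eq:sup2d} as a minimising movement for the energy $\E$ with respect to the product Wasserstein distance $(u,v)\mapsto W_2^2(u,\cdot)+\tfrac{R}{R_\mu}W_2^2(v,\cdot)$ on $\cK_2$, and then to pass to the limit $\tau\searrow0$. First, each minimisation in \eqref{eq:sup2c} is solved by the direct method: $\E$ is convex and lower semicontinuous for the weak topology of $L^2(\R;\R^2)$, the map $(u,v)\mapsto W_2^2(u,f_\tau^n)+\tfrac{R}{R_\mu}W_2^2(v,g_\tau^n)$ is lower semicontinuous for narrow convergence, and a minimising sequence is precompact since the bound on $\E$ controls the $L^2$-norms while finiteness of the Wasserstein terms bounds the second moments and hence gives tightness; the limit lies in the convex set $\cK_2$ and is a minimiser. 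Testing the minimality of $(f_\tau^{n+1},g_\tau^{n+1})$ against $(f_\tau^n,g_\tau^n)$ yields the discrete energy inequality
\[ \frac{1}{2\tau}\Bigl(W_2^2(f_\tau^{n+1},f_\tau^n)+\frac{R}{R_\mu}W_2^2(g_\tau^{n+1},g_\tau^n)\Bigr)+\E(f_\tau^{n+1},g_\tau^{n+1})\le\E(f_\tau^n,g_\tau^n), \]
and summation over $n$ gives a uniform $L^\infty(0,\infty;L^2(\R;\R^2))$-bound on $(f_\tau,g_\tau)$ together with $\sum_n\bigl(W_2^2(f_\tau^{n+1},f_\tau^n)+\tfrac{R}{R_\mu}W_2^2(g_\tau^{n+1},g_\tau^n)\bigr)\le2\tau\,\E(f_0,g_0)$; the triangle inequality then bounds the second moments of $f_\tau(t)$ and $g_\tau(t)$ locally uniformly in time, independently of $\tau$.

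Next I would derive the Euler--Lagrange equation. For $\zeta\in C_0^\infty(\R)$ and $|s|$ small, $u_s:=(\id+s\zeta)_\#f_\tau^{n+1}$ lies in $\cK$, and since $f\mapsto\cF_\tau^n(f,g_\tau^{n+1})$ is minimised at $f_\tau^{n+1}$ one has $\tfrac{d}{ds}\cF_\tau^n(u_s,g_\tau^{n+1})|_{s=0}=0$. Using the first variation of $W_2^2$ through the Brenier map $T_f^n$ from $f_\tau^{n+1}$ to $f_\tau^n$, the relation $\partial_s u_s|_{s=0}=-\partial_x(f_\tau^{n+1}\zeta)$, and $\delta\E/\delta f=(1+R)f+Rg$, this gives
\[ \int_\R\frac{T_f^n-\id}{\tau}\,f_\tau^{n+1}\,\zeta\,dx=\int_\R\bigl[(1+R)\partial_x f_\tau^{n+1}+R\partial_x g_\tau^{n+1}\bigr]f_\tau^{n+1}\,\zeta\,dx; \]
varying $g$ instead, with $\delta\E/\delta g=R(f+g)$, produces the analogous identity pairing $\tfrac{R}{R_\mu\tau}(T_g^n-\id)$ ($T_g^n$ the Brenier map from $g_\tau^{n+1}$ to $g_\tau^n$) against $R\partial_x(f_\tau^{n+1}+g_\tau^{n+1})$. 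A second-order Taylor expansion, $\bigl|\xi(T_f^n(x))-\xi(x)-(T_f^n(x)-x)\xi'(x)\bigr|\le\tfrac12\|\xi''\|_\infty|T_f^n(x)-x|^2$ integrated against $f_\tau^{n+1}$, then converts these identities into a discrete weak form of \eqref{eq:S2w} with errors of order $\|\xi''\|_\infty W_2^2(f_\tau^{n+1},f_\tau^n)$ (and its $g$-counterpart), which sum to $O(\tau)$.

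The crucial step is to recover the second Liapunov functional $\mathcal{H}$ at the discrete level by a flow interchange. Let $(\cS_s)_{s\ge0}$ be the heat semigroup acting on each of the two components; it preserves $\cK_2$ and is the $W_2$-gradient flow of $\mathcal{H}$ for the product metric. Minimality of $(f_\tau^{n+1},g_\tau^{n+1})$ gives $\cF_\tau^n(\cS_s(f_\tau^{n+1},g_\tau^{n+1}))\ge\cF_\tau^n(f_\tau^{n+1},g_\tau^{n+1})$; dividing by $s>0$ and letting $s\to0$, the evolution variational inequality for the entropy bounds the $s$-derivative of the Wasserstein terms by $\tfrac{1}{\tau}\bigl(\mathcal{H}(f_\tau^n,g_\tau^n)-\mathcal{H}(f_\tau^{n+1},g_\tau^{n+1})\bigr)$, while a direct computation — which is exactly the statement that $\E$ decreases along the gradient flow of $\mathcal{H}$ at the rate of the $\mathcal{H}$-dissipation, i.e.\ the algebraic compatibility of the two functionals — gives $\tfrac{d}{ds}\E(\cS_s(f,g))|_{s=0}=-\int_\R\bigl(|\partial_x f|^2+R|\partial_x(f+g)|^2\bigr)dx$. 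Hence the discrete entropy dissipation inequality
\[ \mathcal{H}(f_\tau^{n+1},g_\tau^{n+1})+\tau\int_\R\bigl(|\partial_x f_\tau^{n+1}|^2+R|\partial_x(f_\tau^{n+1}+g_\tau^{n+1})|^2\bigr)dx\le\mathcal{H}(f_\tau^n,g_\tau^n), \]
whose summation, together with the second-moment bound of the first step (which keeps $\mathcal{H}$ bounded below), gives a uniform bound on $(f_\tau,g_\tau)$ in $L^2_{\mathrm{loc}}([0,\infty);H^1(\R;\R^2))$ and, in the limit, estimate $(a)$.

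Finally I would pass to the limit $\tau_k\searrow0$. The bounds in $L^\infty(0,\infty;L^2)\cap L^2_{\mathrm{loc}}(H^1)$ with uniformly small spatial tails (from the second moments), together with a uniform modulus of continuity in time with values in $H^{-3}(\R;\R^2)$ obtained from the discrete weak form and the Wasserstein bounds, give — through an Aubin--Lions--Simon argument based on the compact embedding $H^1\hookrightarrow L^2\hookrightarrow H^{-3}$ over bounded $x$-intervals — a subsequence with $(f_{\tau_k},g_{\tau_k})\to(f,g)$ strongly in $L^2((0,T)\times\R;\R^2)$ and weakly in $L^2_{\mathrm{loc}}(H^1)$, which yields \eqref{T1} and items $(i)$--$(ii)$. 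The strong $L^2$-convergence of $f_{\tau_k}$ and the weak $L^2$-convergence of $\partial_x f_{\tau_k},\partial_x g_{\tau_k}$ identify the nonlinear fluxes, so the discrete weak form passes to the limit (its $O(\tau_k)$ errors vanishing) and gives \eqref{T2}; passing to the limit in the discrete energy inequality — after using the Euler--Lagrange identity to rewrite $\tfrac{1}{2\tau}W_2^2(f_\tau^{n+1},f_\tau^n)=\tfrac{\tau}{2}\int_\R f_\tau^{n+1}\bigl((1+R)\partial_x f_\tau^{n+1}+R\partial_x g_\tau^{n+1}\bigr)^2dx$ and its $g$-analogue, and invoking weak lower semicontinuity — gives $(b)$. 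The main obstacle is the flow interchange step: besides the EVI characterisation of the heat flow as the $W_2$-gradient flow of $\mathcal{H}$ on $\mathcal{P}_2(\R)$, it rests on the precise compatibility of $\E$ and $\mathcal{H}$ that produces the cancellations, and it is this step that supplies the spatial $H^1$-regularity without which the degenerate fluxes $f\partial_x f$, $f\partial_x g$, $g\partial_x f$ and $g\partial_x g$ could not be identified in the limit; a recurring secondary difficulty, handled throughout by the uniform second-moment estimates, is the unboundedness of $\R$.
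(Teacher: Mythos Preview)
Your proposal is correct and follows the paper's strategy closely: direct method for the minimisers, flow interchange with the heat semigroup to obtain the discrete entropy inequality and $H^1$ regularity, Euler--Lagrange equation via push-forward perturbations, and compactness by an Aubin--Lions--Simon argument combined with the second-moment tail control. Two points are worth adjusting.

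First, the order of the Euler--Lagrange derivation and the flow interchange should be reversed. In the paper the $H^1$ regularity of each minimiser is established \emph{first} (as part of Lemma~\ref{Unimin}, via the heat-flow interchange), and only then is the Euler--Lagrange equation derived. This is not merely cosmetic: computing $\frac{d}{ds}\E(u_s,g)\big|_{s=0}$ for the cross term $\int u_s g$ produces $\int f\,\zeta\,\partial_x g$, which requires $g\in H^1(\R)$ to make sense (in the paper this appears in the passage to the limit in $I_{31}^\e$ and $I_{32}^\e$, where Lemma~\ref{L:1} is invoked and needs $H^1$). Your final paragraph says the flow interchange supplies the $H^1$ regularity needed ``in the limit'', but it is in fact already needed at the discrete level to write the Euler--Lagrange identity in the form you state.

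Second, the identity $\frac{1}{2\tau}W_2^2(f_\tau^{n+1},f_\tau^n)=\frac{\tau}{2}\int f_\tau^{n+1}\bigl((1+R)\partial_x f_\tau^{n+1}+R\partial_x g_\tau^{n+1}\bigr)^2dx$ is formally what the Euler--Lagrange equation gives (plug $\zeta=T_f^n-\id$), but making this rigorous when $f_\tau^{n+1}$ may vanish is delicate. The paper does not prove the equality; instead, Corollary~\ref{co:sup1} establishes only the inequality $\tau\,\bigl\|\sqrt{f_\tau^{n+1}}\,\partial_x[(1+R)f_\tau^{n+1}+Rg_\tau^{n+1}]\bigr\|_2\le W_2(f_\tau^{n+1},f_\tau^n)$ by a duality/mollification argument, and this direction is all that is required both for estimate~$(b)$ and for the $H^{-3}$ time-continuity bound (Lemma~\ref{le:sup3}) that drives the compactness.
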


Let us briefly outline the proof of Theorem~\ref{MT:1}:  in the next section, we study the variational problem \eqref{eq:sup2c} and the properties of its minimizers. A key argument here is to note that the availability of the Liapunov functional \eqref{eq:sup1b} allows us to apply an argument from \cite{MMS09} which guarantees that the minimizers are not only in $L^2(\R;\R^2)$ but also in $H^1(\R;\R^2)$. This property is crucial in order to derive the Euler-Lagrange equation in Section~\ref{s:ele}. The latter is then used to obtain additional regularity on the minimizers, adapting an argument from \cite{Ot98}. Convergence of the variational approximation is established in Section~\ref{s:conv}. Finally, three technical results are collected in the Appendix. 

\medskip

As a final comment, let us point out that we have assumed for simplicity that the initial data $f_0$ and $g_0$ are probability measures but that the case of initial data having different masses may be handled in the same way after a suitable rescaling: more precisely, let $(f_0,g_0) \in L^2(\R)\cap L^1(\R, (1+x^2) dx)$ and denote a solution to \eqref{eq:problem} by $(f,g)$. Setting $F:=f/\|f_0\|_1$ and $G:=g/\|g_0\|_1$ and recalling that $\|f(t)\|_1=\|f_0\|_1$ and $\|g(t)\|_1=\|g_0\|_1$ for all $t\ge 0$, we realize that $(F,G)$ solves
\[\left\{
\begin{array}{rcl}
\displaystyle\frac{1}{\|g_0\|_1}\ \p_t F & = & \p_x\left[ F\ \p_x \left( (1+R) \eta^2 F + R G \right) \right], \\[1ex]
 & & \\
\displaystyle\frac{R}{R_\mu \|f_0\|_1}\ \p_t G & = & \p_x\left[ G\ \p_x\left( R F + R \eta^{-2} G \right) \right],
\end{array}
\right.
\qquad (t,x)\in (0,\infty)\times \R,
\]
with $\eta^2 := \|f_0\|_1/\|g_0\|_1$ and initial data $(F_0,G_0):=(f_0/\|f_0\|_1,g_0/\|g_0\|_1)\in\cK_2$. 
The corresponding variational scheme then involves the functional  
$$
\frac{1}{2\tau}\ \left( \frac{1}{\|g_0\|_1}\ W_2^2\left( u,F_0 \right) + \frac{R}{R_\mu \|f_0\|_1}\ W_2^2\left( v,G_0 \right) \right) + 
\frac{\eta^2}{2}\ \|u\|_2^2 + \frac{R}{2}\ \left\| \eta\ u + \eta^{-1}\ v \right\|_2^2, \quad (u,v)\in\cK_2\,,
$$
to which the analysis performed below (with $\eta=1$) also applies.

\section{A variational scheme}\label{s:varsch}

Given  $\tau>0$  and $(f_0,g_0)\in\cK_2,$ we introduce the  functional
\begin{equation}\label{functional}
\cF_\tau(u,v) := \frac{1}{2\tau}\ \left(W_2^2(u,f_0)+\frac{R}{R_\mu}\ W_2^2(v,g_0) \right) + \E(u,v)\,, \quad (u,v)\in\cK_2\,,
\end{equation}
and consider the minimization problem 
\begin{equation}\label{min}
\inf_{(u,v)\in\cK_2}\cF_\tau(u,v).
\end{equation}

\subsection{Existence and properties of minimizers}\label{s:epm}

Let us start by proving that, for each $(f_0,g_0)\in\cK_2$, the minimization problem \eqref{min} has a unique solution in $\cK_2.$

\begin{lemma}\label{Unimin} Given $(f_0,g_0)\in\cK_2$ and $\tau>0,$ there exists a unique minimizer $(f,g)\in\cK_2$ of \eqref{min}.
Additionally, $(f,g)\in H^1(\R;\R^2)$ with
\begin{equation}\label{Heat3}
\|\p_xf\|_2^2+R\|\p_x(f+g)\|_2^2\leq\frac{1}{\tau}\ \left[ H(f_0)-H(f) +\frac{R}{R_\mu}\ \left( H(g_0)-H(g) \right) \right]\,,
\end{equation}
where
\begin{equation}\label{eq:H}
H(h):=\int_\R h\ln (h)\, dx \qquad\text{for $h\in L^1(\R)$  such that $h\ge 0$ a.e. and $h\ln(h)\in L^1(\R).$}
\end{equation}
\end{lemma}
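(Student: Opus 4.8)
The plan is to prove existence of a minimizer by the direct method, uniqueness by strict convexity (along interpolating curves, if necessary), and the additional $H^1$-regularity via the displacement-convexity / flow-interchange argument of \cite{MMS09}.

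\emph{Existence.} First I would observe that $\cF_\tau$ is bounded below on $\cK_2$: indeed $\E(u,v)\ge 0$ and the Wasserstein terms are nonnegative, so $\inf_{\cK_2}\cF_\tau \in [0,\infty)$. Take a minimizing sequence $(u_k,v_k)_{k\ge1}$ in $\cK_2$. Since $W_2^2(u_k,f_0)$ and $W_2^2(v_k,g_0)$ stay bounded along the sequence, the second moments of $u_k$ and $v_k$ are bounded, so $\{u_k\}$ and $\{v_k\}$ are tight; by Prokhorov they converge (up to a subsequence) narrowly to probability measures $f,g$, and the bound on second moments upgrades this to convergence in $\mathcal{P}_2(\R)$ (equivalently, $W_2$-convergence), so $W_2^2(f,f_0)\le\liminf W_2^2(u_k,f_0)$ and likewise for $g$. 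Simultaneously, the bound on $\E(u_k,v_k)$ gives a bound on $\|u_k\|_2$ and $\|u_k+v_k\|_2$, hence on $\|v_k\|_2$, so along a further subsequence $u_k\rightharpoonup f$ and $v_k\rightharpoonup g$ weakly in $L^2(\R)$; this forces the narrow limits to be absolutely continuous with $L^2$ densities, identifies them with $f,g$, and gives $\int_\R f\,dx = \int_\R g\,dx = 1$ together with $f,g\ge0$ a.e., so $(f,g)\in\cK_2$. Weak $L^2$-lower semicontinuity of the (convex, continuous) quadratic functional $\E$ then yields $\E(f,g)\le\liminf\E(u_k,v_k)$, and combining the two lower-semicontinuity statements shows $(f,g)$ is a minimizer.

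\emph{Uniqueness.} I would use that $W_2^2(\cdot,f_0)$ and $W_2^2(\cdot,g_0)$ are convex along linear interpolations of the densities (or, more robustly, strictly displacement-convex along geodesics), while $u\mapsto \frac12\|u\|_2^2$ and $(u,v)\mapsto\frac{R}{2}\|u+v\|_2^2$ are convex along linear interpolation and strictly convex in the relevant directions: if $(f_1,g_1)$ and $(f_2,g_2)$ were two distinct minimizers, then along the segment $t\mapsto((1-t)f_1+tf_2,(1-t)g_1+tg_2)$ (which stays in $\cK_2$) the functional would be strictly below the common minimal value somewhere unless $f_1=f_2$ a.e. and $f_1+g_1=f_2+g_2$ a.e., whence $(f_1,g_1)=(f_2,g_2)$, a contradiction.

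\emph{The $H^1$-estimate — the main obstacle.} This is the heart of the lemma and where I expect the real work. The idea, following \cite{MMS09}, is a \emph{flow interchange}: perturb the minimizer $(f,g)$ by running it for a short time $s$ along the heat semigroup, i.e.\ set $(f_s,g_s):=(e^{s\Delta}f,e^{s\Delta}g)$, which stays in $\cK_2$ and for which classical results give $\frac{d}{ds}H(f_s)\big|_{s=0^+} = -\|\p_x\sqrt{f}\,\|_2^2\cdot 4$... more precisely $\frac{d}{ds}\big|_{s=0^+}H(f_s)=-\int_\R |\p_x f|^2/f\,dx$ is not what we want; rather one uses that the heat flow is the $2$-Wasserstein gradient flow of $H$, so $\frac{1}{2s}W_2^2(f_s,f) \le H(f)-H(f_s) + o(1)$ (an \emph{Evolution Variational Inequality}–type estimate, or simply that $H$ decreases along its own gradient flow at rate controlled by $W_2$). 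Minimality of $(f,g)$ gives $\cF_\tau(f,g)\le\cF_\tau(f_s,g_s)$, i.e.
\[
\E(f,g)-\E(f_s,g_s) \le \frac{1}{2\tau}\Big(W_2^2(f_s,f_0)-W_2^2(f,f_0)\Big) + \frac{R}{2\tau R_\mu}\Big(W_2^2(g_s,g_0)-W_2^2(g,g_0)\Big).
\]
On the right, using the triangle inequality $W_2(f_s,f_0)\le W_2(f_s,f)+W_2(f,f_0)$ and the bound $W_2^2(f_s,f)\le 2s\,[H(f)-H(f_s)]\le 2s\,[H(f)-\inf H]$, one shows the right side is $\le \frac{C}{\tau}\big([H(f_0)-H(f)] + \frac{R}{R_\mu}[H(g_0)-H(g)]\big) + o(1)$ after dividing by an appropriate power of $s$; actually the cleanest route is to divide by $s$, let $s\to0^+$, and use that $s\mapsto W_2^2(f_s,f_0)$ has right-derivative at $0$ bounded by $2\big(H(f_0)-H(f)\big)$ via the geodesic convexity of $H$ (the $\lambda=0$ EVI for the heat flow). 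On the left, one computes $\frac{d}{ds}\big|_{s=0^+}\big[-\E(f_s,g_s)\big]$: since $\E$ is quadratic, $-\frac{d}{ds}\E(e^{s\Delta}f,e^{s\Delta}g)\big|_{s=0^+} = -\langle D\E(f,g),(\Delta f,\Delta g)\rangle$, and $D\E(f,g)=\big((1+R)f+Rg,\ R(f+g)\big)$, so after integration by parts this equals $\|\p_x f\|_2^2 + R\|\p_x(f+g)\|_2^2$ — exactly the left side of \eqref{Heat3}. Combining these two computations yields \eqref{Heat3}.

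The technical obstacles I anticipate: (1) justifying the differentiation of $\E$ along the heat flow requires knowing $f,g\in H^1$ a priori, so one must instead work at positive $s$ with difference quotients, use that $e^{s\Delta}f\in H^\infty$ for $s>0$, obtain the inequality with $\|\p_x f_s\|_2^2 + R\|\p_x(f_s+g_s)\|_2^2$ on the left (by reversing the integration by parts onto the smooth function), and then pass $s\to0^+$ using weak lower semicontinuity of $\|\p_x\cdot\|_2^2$ together with $f_s\to f$, $g_s\to g$ in $L^2$ — this simultaneously proves $f,g\in H^1$ and the estimate; (2) controlling $W_2^2(f_s,f_0)-W_2^2(f,f_0)$ from above by $s$ times something finite needs the convexity of $H$ along $W_2$-geodesics (McCann), which is where $(f_0,g_0)\in\cK_2$ and finiteness of $H(f_0),H(g_0)$ — guaranteed since $f_0\in L^2\cap L^1(\,dx)$ forces $f_0\ln f_0\in L^1$ — enter; (3) one must check $e^{s\Delta}$ preserves $\cK_2$, i.e.\ preserves nonnegativity, unit mass, $L^2$-membership, and finite second moment (the second moment grows linearly in $s$, which is harmless). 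None of these is deep but the bookkeeping around $s\to0^+$ is the delicate part.
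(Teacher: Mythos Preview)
Your proposal is correct and follows essentially the same approach as the paper: direct method plus lower semicontinuity for existence, strict convexity of $\E$ along linear interpolation for uniqueness, and the flow-interchange argument of \cite{MMS09} with the heat semigroup for the $H^1$-estimate. The only presentational difference is that the paper avoids right-derivatives at $s=0$ entirely: it works at a fixed $t>0$, bounds $(\E(f,g)-\E(G_tf,G_tg))/t$ from below by $\|\p_x G_tf\|_2^2+R\|\p_x G_t(f+g)\|_2^2$ via the monotonicity of $s\mapsto\|\p_x G_sh\|_2$, bounds $(W_2^2(G_tf,f_0)-W_2^2(f,f_0))/(2t)$ from above by $H(f_0)-H(G_tf)$ by integrating the EVI and using the monotonicity of $s\mapsto H(G_sh)$, combines these into the estimate \eqref{Heat3} with $(G_tf,G_tg)$ in place of $(f,g)$, and then lets $t\to0$ --- exactly the fix you outline in your technical obstacle~(1).
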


 Recall that, if $h\in\cK$, then $h\ln{h}\in L^1(\R)$ (see Lemma~\ref{le:ap1} below) so that the right-hand side of \eqref{Heat3} is well-defined.
 
\begin{proof} 
The uniqueness of the minimizer follows from  the convexity of $\cK_2$ and $W_2^2$ and the strict convexity of the energy functional $\E$. 

We next prove the existence of a minimizer. To this end, pick a minimizing sequence $(u_k,v_k)_{k\ge 1}\in \cK_2.$ 
There exists a constant $C>0$  such that 
\begin{eqnarray}
\| u_k\|_2 + \|v_k\|_2 & \le & C\,, \quad k\ge 1\,, \label{eq:sup3} \\
W_2(u_k,f_0) + W_2(v_k,g_0) & \le & C\,, \quad k\ge 1\,. \label{eq:sup4} 
\end{eqnarray}
From \eqref{eq:sup3} we obtain at once that there  exist $(f,g)\in L^2(\R;\R^2)$ and a subsequence of $(u_k,v_k)_{k\ge 1}$ (denoted again by $(u_k,v_k)_{k\ge 1}$) such that
\begin{equation}
u_k\rightharpoonup f\qquad\text{and}\qquad v_k\rightharpoonup g\qquad\text{in $L^2(\R).$} \label{eq:sup5}
\end{equation}
Let us first  check that $(f,g)\in\cK_2.$ 
 Indeed, the nonnegativity of $f$ and $g$ readily follows from that of $u_k$ and $v_k$ by \eqref{eq:sup5} while integrating the inequality $x^2\le 2y^2+2|x-y|^2$ with respect to  an arbitrary $\pi\in\Pi(u_k,f_0)$ yields
\begin{eqnarray*}
 \int_\R u_k(x) x^2\, dx = \int_{\R^2} x^2\, d\pi(x,y) & \le & 2  \int_{\R^2} y^2\, d\pi(x,y)+2\int_{\R^2} |x-y|^2\, d\pi(x,y) \\
& \le &  2 \int_{\R^2} f_0(y) y^2\, dy + 2\int_{\R^2} |x-y|^2\, d\pi(x,y) \,,
\end{eqnarray*}
which implies by virtue of \eqref{eq:sup4} that 
\begin{equation}\label{eq:1}
\int_\R u_k(x) x^2\, dx\leq 2\int_\R f_0(x) x^2\, dx + 2 W_2^2(u_k,f_0) \le C\,, \qquad k\ge 1\,.
\end{equation}
 Similarly,
\begin{equation}\label{eq:1b}
\int_\R v_k(x) x^2\, dx \le C\,, \qquad k\ge 1\,.
\end{equation}
Owing to \eqref{eq:sup3}, \eqref{eq:1}, and \eqref{eq:1b}, we deduce from the Dunford-Pettis theorem that $(u_k)_{k\ge 1}$ and $(v_k)_{k\ge 1}$ are weakly sequentially compact in $L^1(\R).$ We may thus assume (after possibly extracting a further subsequence) that $u_k\rightharpoonup f$ and $v_k\rightharpoonup g$ in $L^1(\R),$ whence  
\[
\int_\R f(x)\, dx = \lim_{k\to\infty} \int_\R u_k(x)\, dx = 1 \quad\text{ and }\quad \int_\R g(x)\, dx = \lim_{k\to\infty} \int_R v_k(x)\, dx=1\,.
\]
Finally, combining \eqref{eq:sup3}, \eqref{eq:1}, and \eqref{eq:1b} with a truncation argument ensure that $f$ and $g$ both belong to $L^1(\R,(1+x^2)dx).$
 Summarising, we have shown that $(f,g)\in \cK_2.$

The next step is to prove that 
\[
\cF_\tau(f,g)=\inf_{(u,v)\in \cK_2} \cF_\tau(u,v) \,.
\]
 Indeed, on the one hand, the  weak convergence \eqref{eq:sup5} implies that
\[
\E(f,g)\leq \liminf_{k\to\infty}\E(u_k,v_k).
\]
 On the other hand, we recall that the $2$-Wasserstein metric $W_2$ is lower semicontinuous with respect to the  narrow convergence of  probability measures in each of its arguments,  
see \cite[Proposition~7.1.3]{AGS08}, and the weak convergence of $(u_k,v_k)_{k\ge 1}$ in $L^1(\R;\R^2)$ ensures that
\[
W_2^2(f,f_0)\leq \liminf_{k\to\infty} W_2^2(u_k,f_0)\qquad\text{and}\qquad W_2^2(g,g_0)\leq \liminf_{k\to\infty} W_2^2(v_k,g_0)\,.
\]
Consequently, 
$$
\cF_\tau(f,g) \leq \liminf_{k\to\infty} \cF_\tau(u_k,v_k) \;\;\text{ with }\;\; (f,g)\in\cK_2\,,
$$
so that $(f,g)$ is a minimizer of $\cF_\tau$ in $\cK_2$.

As a final step, we show that $f$ and $g$ belong to $H^1(\R).$ 
 To this end, we follow the approach developed in \cite{MMS09} and take advantage of the availability of another
 Liapunov function as already discussed in the Introduction. More precisely, denote  the heat semigroup by $(G_t)_{t\ge 0}$, that is,
\[
(G_t h)(x) := \frac{1}{\sqrt{4\pi t}}\ \int_\R \exp{\left( - \frac{|x-y|^2}{4t} \right)}\ h(y)\, dy\,, \quad (t,x)\in [0,\infty)\times\R\,,
\]  
for $h\in L^1(\R)$. 
Since $(f,g)\in\cK_2$, classical properties of the heat semigroup ensure that $(G_t f, G_t g)\in\cK_2$ for all $t\ge 0$. 
Consequently,  $\cF_\tau(f, g)\leq\cF_\tau(G_tf, G_tg)$ and we deduce that
\begin{equation}\label{Heat-1}
\begin{aligned}
\E(f,g) & - \E(G_tf,G_t g) \leq \frac{1}{2\tau}\left[\left(W_2^2(G_tf,f_0)-W_2^2(f,f_0)\right) + \frac{R}{ R_\mu} \left(W_2^2(G_tg,g_0)-W_2^2(g,g_0)\right)\right]
\end{aligned}
\end{equation}
for all $t\geq0.$
Moreover, for all $t>0$, we have 
\begin{align*}
\frac{d}{dt}\E(G_tf,G_tg)=&\int_{\R} \left[ G_tf\ \p_t G_tf + R\ (G_tf+G_tg)\ \p_t(G_tf+G_tg) \right]\, dx=-\|\p_xG_tf\|_2^2-R\ \|\p_x G_t(f+g)\|_2^2,
\end{align*}
and by integration with respect to time we find that
$$
\frac{1}{t}\ \int_0^t \left[ \|\p_x G_s f\|_2^2+R\|\p_x G_s (f+g)\|_2^2 \right] \, ds\leq  \frac{\E(f,g)-\E(G_tf,G_tg)}{t}\qquad\text{for all $t>0.$}
$$
 Since $s\mapsto \|\p_x G_s h\|_2$ is non-increasing  for $h\in L^1(\R)$ we end up with
\begin{equation}\label{Heat0}
\|\p_x G_t f\|_2^2+R\|\p_x G_t (f+g)\|_2^2 \leq  \frac{\E(f,g)-\E(G_tf,G_tg)}{t}\qquad\text{for all $t>0.$}
\end{equation}
We recall now some properties of the heat flow  in connection with the $2$-Wasserstein distance $W_2$, see \cite{AGS08,CMV06,Ot01}, these properties being actually collected in \cite[Theorem~2.4]{MMS09}.
 The heat flow is the gradient flow of the entropy functional $H$ given by \eqref{eq:H} for $W_2$ and, for all  $(h,\tilde{h})\in \cK_2,$ we have \cite[Theorem~11.1.4]{AGS08}
\begin{equation}\label{Heat}
\frac{1}{2}\frac{d}{dt}W_2^2(G_t h,\tilde{h}) + H(G_t h)\leq H(\tilde{h}) \qquad \text{for a.e. $t\geq0$}.
\end{equation}
Choosing  $(h,\tilde{h})=(f,f_0)$ and $(h,\tilde{h})=(g,g_0)$ in \eqref{Heat}, we obtain 
$$
\frac{1}{2}\frac{d}{dt}\left[ W_2^2(G_tf,f_0)+\frac{R}{R_\mu}W_2^2(G_t g,g_0) \right] \leq H(f_0)-H(G_t f) +\frac{R}{R_\mu}\left( H(g_0)-H(G_tg) \right)
$$
for a.e. $t\ge 0.$ Integrating the above inequality with respect to time and using the time monotonicity of $s\mapsto H(G_s f)$ and $s\mapsto H(G_s g)$ give
\begin{eqnarray}
& & \frac{1}{2} \left[ W_2^2(G_tf,f_0) - W_2^2(f,f_0) +\frac{R}{R_\mu}\ \left( W_2^2(G_t g,g_0) - W_2^2(g,g_0) \right) \right] \nonumber \\
& & \leq \int_0^t \left[ H(f_0)-H(G_s f) + \frac{R}{R_\mu}\ \left( H(g_0)-H(G_s g) \right) \right]\, ds \nonumber \\
& & \leq t\ \left[ H(f_0)-H(G_t f) + \frac{R}{R_\mu}\ \left( H(g_0)-H(G_t g) \right) \right]. \label{Heat1}
\end{eqnarray}
Gathering \eqref{Heat-1}, \eqref{Heat0}, and \eqref{Heat1}, we find
\begin{equation}\label{Heat2}
\|\p_x G_t f\|_2^2 + R\ \|\p_x G_t(f+g)\|_2^2 \leq\frac{1}{\tau}\ \left[ H(f_0)-H(G_t f) + \frac{R}{R_\mu}\ \left( H(g_0) - H(G_t g) \right) \right]
\end{equation}
for $t>0.$ 
As a direct consequence of \eqref{Heat2}  and the boundedness from below \eqref{eq:ap2} of $H$ in $\cK$, 
$(\p_x G_t f)_{t>0}$ and $(\p_xG_t g)_{t>0}$ are bounded in $L^2(\R)$ and converge to $\p_x f$ and $\p_x g$, respectively, in the sense of distributions as $t\to 0$. This implies that both $f$ and $g$ belongs to $H^1(\R)$ and we can pass to the limit as $t\to 0$ in \eqref{Heat2} to  obtain the desired estimate \eqref{Heat3} and finish the proof.
\end{proof}

\subsection{The Euler-Lagrange equation}\label{s:ele}

 We now identify the Euler-Lagrange equation corresponding to the minimization problem \eqref{min}. 

\begin{lemma}\label{le:sup1}
Given $(f_0,g_0)\in\cK_2$ and $\tau>0$, the minimizer $(f,g)$ of $\cF_\tau$ in $\cK_2$ satisfies
\begin{eqnarray}
\left| \frac{1}{\tau}\ \int_\R \xi\ (f-f_0)\, dx + \int_\R \left[ \left( (1+R)\ f\ \p_x f + R\ f\ \p_x g \right)\ \p_x\xi \right]\, dx \right| & \le & \frac{\|\p_x^2\xi\|_\infty}{2\tau}\ W_2^2(f,f_0)\,, \label{eq:sup6} \\
\left| \frac{1}{\tau}\ \int_\R \xi\ (g-g_0)\, dx + R_\mu\ \int_\R \left[ \left( g\ \p_x f + g\ \p_x g \right)\ \p_x\xi \right] \, dx \right| & \le & \frac{\|\p_x^2\xi\|_\infty}{2\tau}\ W_2^2(g,g_0)\,, \label{eq:sup7}
\end{eqnarray}
for $\xi\in C_0^\infty(\R)$.
\end{lemma}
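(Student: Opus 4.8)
The plan is to derive the Euler--Lagrange equations by the standard perturbation-of-the-minimizer argument adapted to the Wasserstein setting, as in \cite{JKO98,Ot98}. The key point is that, since $(f,g)$ minimizes $\cF_\tau$ over the convex set $\cK_2$, any one-parameter family $(f_s,g_s)_{s}$ with $(f_0,g_0)=(f,g)$ that remains in $\cK_2$ yields $\frac{d}{ds}\cF_\tau(f_s,g_s)\big|_{s=0^+}\ge 0$. Because $\cK$ is closed under the push-forward by diffeomorphisms of $\R$ that preserve mass (probability is preserved, and the $L^2$ and second-moment bounds are kept under a bi-Lipschitz change of variables), I will use for the $f$-equation the flow generated by a fixed vector field $\zeta:=\p_x\xi$ with $\xi\in C_0^\infty(\R)$: let $\Phi_s$ solve $\p_s\Phi_s=\zeta\circ\Phi_s$, $\Phi_0=\id$, and set $f_s:=(\Phi_s)_\#f$, while keeping $g_s:=g$ fixed. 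One then computes the three contributions to $\frac{d}{ds}\cF_\tau(f_s,g)\big|_{s=0}$ separately.

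\textbf{Step 1: the Wasserstein term.} Here I invoke the standard estimate (see \cite[Chapter~8]{Vi03}, or the computation in \cite{JKO98}) that for the push-forward by the near-identity flow $\Phi_s$,
\[
\frac{1}{2\tau}\,\big|W_2^2(f_s,f_0)-W_2^2(f,f_0)\big| \le \frac{1}{\tau}\int_\R f\,(x-T(x))\cdot\zeta(x)\,s\,dx + o(s),
\]
where $T$ is the optimal map pushing $f$ onto $f_0$; more precisely the one-sided derivative is bounded above and below by $\pm\frac{1}{2\tau}\|\p_x\zeta\|_\infty W_2^2(f,f_0)$ plus the linear term $\frac1\tau\int_\R f(x-T(x))\zeta(x)\,dx$. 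The cleanest route, which produces exactly the error term $\frac{\|\p_x^2\xi\|_\infty}{2\tau}W_2^2(f,f_0)$ in \eqref{eq:sup6}, is to estimate directly: using any optimal plan $\pi$ for $(f,f_0)$ one has $W_2^2(f_s,f_0)\le \int |\Phi_s(x)-y|^2 d\pi$, and a Taylor expansion of $\Phi_s(x)=x+s\zeta(x)+O(s^2\|\zeta\|_\infty\|\p_x\zeta\|_\infty)$ gives the bound; combined with the analogous lower bound obtained by pushing forward $f_0$ through $\Phi_{-s}$, one gets that the one-sided derivatives of $\frac{1}{2\tau}W_2^2(\cdot,f_0)$ at $f$ differ from $-\frac1\tau\int f\,\zeta\cdot(y-x)\,d\pi$ by at most $\frac{1}{2\tau}\|\p_x\zeta\|_\infty W_2^2(f,f_0)=\frac{\|\p_x^2\xi\|_\infty}{2\tau}W_2^2(f,f_0)$. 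Note $\frac{R}{R_\mu}W_2^2(g_s,g_0)$ is constant in $s$ and drops out.

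\textbf{Step 2: the energy term.} Since $\E(f,g)=\frac12\int[f^2+R(f+g)^2]\,dx$ and we already know from Lemma~\ref{Unimin} that $f,g\in H^1(\R)$, the map $s\mapsto\E(f_s,g)$ is differentiable and, using the transport identity $\p_s f_s+\p_x(f_s\,\zeta\circ\Phi_s)=0$ (continuity equation for the push-forward),
\[
\frac{d}{ds}\E(f_s,g)\Big|_{s=0}=\int_\R \frac{\delta\E}{\delta f}(f,g)\,\p_s f_s\big|_{s=0}\,dx
=-\int_\R \big((1+R)f+Rg\big)\,\p_x(f\zeta)\,dx
=\int_\R \big((1+R)f\p_xf+Rf\p_xg\big)\,\zeta\,dx,
\]
after an integration by parts justified by the $H^1$-regularity and compact support of $\zeta=\p_x\xi$; here I also use $f\in L^2$, $\p_xf\in L^2$ so $f\p_xf\in L^1$, and likewise $f\p_xg\in L^1$. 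Adding Steps~1 and~2, using $\frac{d}{ds}\cF_\tau(f_s,g)\big|_{s=0^\pm}\gtrless 0$ with the two signs of $s$ (equivalently replacing $\zeta$ by $-\zeta$), and recalling $\zeta=\p_x\xi$, $\|\p_x\zeta\|_\infty=\|\p_x^2\xi\|_\infty$, yields \eqref{eq:sup6}. The bound \eqref{eq:sup7} follows the same way, now perturbing the $g$-component only, with the extra constant $R/R_\mu$ in front of the Wasserstein term combining with the $\frac{\delta\E}{\delta g}$-computation to produce the factor $R_\mu$ in front of the flux term (since $\frac{R}{R_\mu}\cdot\frac{1}{2\tau}$ times the $g$-Wasserstein variation, set against $\frac{\delta\E}{\delta g}=R(f+g)$, rescales to give $R_\mu\int(g\p_xf+g\p_xg)\p_x\xi\,dx$ after dividing through by $R/R_\mu$).

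\textbf{Main obstacle.} The delicate point is the rigorous justification of Step~1: controlling the one-sided derivative of $t\mapsto W_2^2(\cdot,f_0)$ along the perturbed curve $f_s=(\Phi_s)_\#f$ with an error that is genuinely quadratic in $s$ and carries exactly the factor $\|\p_x^2\xi\|_\infty W_2^2(f,f_0)$. This requires care because the optimal map/plan for $(f,f_0)$ need not be smooth and $f_0$ is merely in $\cK$; the safe approach is to avoid differentiating the Wasserstein distance and instead use the variational inequality in integrated form — plug the admissible competitor $((\Phi_s)_\#f,\,g)$ into $\cF_\tau(f,g)\le\cF_\tau((\Phi_s)_\#f,g)$, bound $W_2^2((\Phi_s)_\#f,f_0)\le\int|\Phi_s(x)-y|^2\,d\pi(x,y)$ for an optimal $\pi$, Taylor-expand the integrand, divide by $s$, and let $s\to0^+$ and $s\to0^-$. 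Everything else (Step~2, the integrations by parts, the rescaling for \eqref{eq:sup7}) is routine given the $H^1\cap L^2\cap L^1((1+x^2)dx)$ regularity from Lemma~\ref{Unimin}.
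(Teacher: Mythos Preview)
Your overall strategy---perturb the minimizer by push-forward along a smooth vector field, use the minimality inequality $\cF_\tau(f,g)\le\cF_\tau(f_s,g)$, bound the Wasserstein increment via a suboptimal transport plan, and compute the energy variation---is exactly the paper's approach (following \cite{JKO98,Ot98,Vi03}). The minor variations (using the flow $\Phi_s$ of $\zeta$ rather than the linear map $x\mapsto x+\e\zeta(x)$; perturbing one component at a time rather than both simultaneously; computing the energy variation via the continuity equation rather than via the explicit change-of-variables identities the paper works out as $I_1^\e,I_2^\e,I_3^\e$) are all legitimate and lead to the same place.

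There is, however, a genuine gap in your Step~1. You claim that the one-sided derivative of $\frac{1}{2\tau}W_2^2(\cdot,f_0)$ along $s\mapsto f_s$ differs from the linear term $\frac{1}{\tau}\int \zeta(x)(x-y)\,d\pi$ by at most $\frac{1}{2\tau}\|\p_x\zeta\|_\infty W_2^2(f,f_0)$. This is not where the error comes from: your own ``safe approach'' at the end (Taylor-expand $|\Phi_s(x)-y|^2$, divide by $s$, send $s\to 0^\pm$) yields the linear term \emph{exactly}, with no $W_2^2$ remainder---the quadratic-in-$s$ contributions vanish in the limit. What you then obtain is the exact Euler--Lagrange identity
\[
\frac{1}{\tau}\int_\R (x-T(x))\,\zeta(x)\,f(x)\,dx \;+\; \int_\R f\,\p_x\big[(1+R)f+Rg\big]\,\zeta\,dx \;=\;0,
\]
with $T$ the optimal map pushing $f$ to $f_0$ (this is the paper's identity (gau) specialized to $\eta=0$). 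The passage from this to \eqref{eq:sup6} requires a \emph{second}, separate Taylor expansion---of $\xi$ itself, using $\zeta=\p_x\xi$---namely
\[
\big|\xi(x)-\xi(T(x))-\p_x\xi(x)\,(x-T(x))\big|\le \tfrac{1}{2}\|\p_x^2\xi\|_\infty\,|x-T(x)|^2,
\]
which, after multiplying by $f(x)$ and integrating, converts $\int f\,\zeta(x)(x-T(x))\,dx$ into $\int\xi\,(f-f_0)\,dx$ with error bounded by $\tfrac{1}{2}\|\p_x^2\xi\|_\infty W_2^2(f,f_0)$. This is precisely the paper's step (ga1), and it is the sole source of the right-hand side in \eqref{eq:sup6}. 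Your write-up never makes this step explicit and instead attributes the error to the Wasserstein perturbation; as written, one cannot see how the quantity $\int\xi(f-f_0)\,dx$ ever enters the computation.
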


\begin{proof}  To derive \eqref{eq:sup6}-\eqref{eq:sup7} we follow the general strategy outlined in \cite[Chapter~8]{Vi03}. 
According to Brenier's theorem \cite[Theorem 2.12]{Vi03}, there are two convex functions $\varphi:\R\to\R$ and $\psi:\R\to\R$ which are uniquely determined up to an additive constant such that 
\begin{subequations} \label{eq:Brenier}
\begin{equation}\label{Brenier}
W_2^2(f,f_0)=\int_\R |x - \p_x\varphi(x)|^2\ f_0(x)\, dx = \inf_{T\#f_0=f} \int_\R |x-T(x)|^2\ f_0(x)\, dx,
\end{equation}
where the infimum is taken over all measurable functions $T:\R\to\R$ pushing $f_0$ forward to $f$ ($f=T\#f_0$), i.e. satisfying
\[
\int_{B} f(x)\, dx = \int_{T^{-1}(B)} f_0(x)\, dx \quad\text{for all Borel sets $B$ of $\R,$}
\]
and
\begin{equation}\label{BrenierB}
\quad W_2^2(g,g_0) = \int_\R |x-\p_x\psi(x)|^2\ g_0(x)\, dx = \inf_{S\#g_0=g} \int_\R |x-S(x)|^2\ g_0(x)\, dx.
\end{equation}
\end{subequations}
We pick now two test functions $\eta$ and $\xi$ in $C^\infty_0(\R) $ and define 
\begin{equation}\label{eq:app2}
f_\e := ((\id+\e\xi)\circ \p_x\varphi) \# f_0 = (\id+\e\xi) \# f \quad\text{ and }\quad g_\e := ((\id+\e\eta)\circ \p_x\psi) \# g_0 = (\id+\e\eta) \# g
\end{equation}
for each $\e\in[0,1]$, where $\id$ is the identity function on $\R.$ To ease notation we set
\begin{equation}\label{diff}
T_\e := \id+\e\xi \quad\text{ and }\quad S_\e := \id+\e\eta,
\end{equation}
and observe that  there is $\e_0$ small enough (depending on both $\xi$ and $\eta$) such that, for $\e\in[0,\e_0],$ $T_\e$ and $S_\e$ are $C^\infty-$diffeomorphisms in $\R$. 
 Then, by \eqref{eq:app2}, we find the identities
\begin{equation}\label{eq:app} 
f_\e = \frac{f\circ T_\e^{-1}}{\p_x T_\e\circ T_\e^{-1}} \quad\text{ and }\quad g_\e = \frac{g\circ S_\e^{-1}}{\p_x S_\e \circ S_\e^{-1}},  \qquad \e\in (0,\e_0].
\end{equation}
 Observing that $\|f_\e\|_1=\|f\|_1=\|g_\e\|_1=\|g\|_1=1$ and
\begin{equation}\label{2-1}
\|f_\e\|_2^2=\int_\R \frac{|f(x)|^2}{\p_x T_\e(x)}\, dx \qquad\text{and}\qquad \|g_\e\|_2^2 = \int_\R\frac{|g(x)|^2}{\p_x S_\e(x)}\, dx,
\end{equation}
we clearly
 have $(f_\e,g_\e)\in \cK_2$ for all $\e\in(0,\e_0]$  and thus $\cF_\tau(f,g) \le \cF_\tau(f_\e,g_\e)$. 
 Consequently,
\begin{equation}\label{inequ}
0 \le \frac{1}{2\tau}\ \left[ W_2^2(f_\e,f_0) - W_2^2(f,f_0) + \frac{R}{R_\mu}\ \left( W_2^2(g_\e,g_0) - W_2^2(g,g_0) \right) \right] + \E(f_\e,g_\e) - \E(f,g).
\end{equation}
Concerning the energy $\E$, it follows from \eqref{2-1}  that 
\begin{equation}\label{rel}
2(\E(f_\e,g_\e)-\E(f,g))= (1+R)\ I_1^\e + R\ I_2^\e + 2R\ I_3^\e\,,
\end{equation}
with
\begin{eqnarray*}
& & I_1^\e := \int_\R \left( \frac{1}{ \p_x T_\e(x)}-1 \right)\ |f(x)|^2\, dx\,, \qquad I_2^\e := \int_\R \left( \frac{1}{\p_x S_\e(x)}-1 \right)\ |g(x)|^2\, dx\,, \\
& & \hspace{2cm} I_3^\e := \int_\R \left( f_\e\ g_\e - f\ g \right)(x)\, dx\,. 
\end{eqnarray*}
We now consider the  three integrals in the right-hand side of the relation \eqref{rel} separately: since
\[
I_1^\e=-\e\int_\R\frac{\p_x\xi}{1+\e\p_x\xi}\ f^2\, dx \quad\text{and}\quad I_2^\e=-\e\int_\R\frac{\p_x\eta}{1+\e\p_x\eta}\ g^2\, dx\,,
\]
it readily follows from Lebesgue's dominated convergence theorem that
\begin{equation}\label{L-1}
\lim_{\e\to 0} \frac{I^\e_1}{\e} = - \int_\R \p_x\xi\ f^2\, dx \quad\text{and}\quad \lim_{\e\to 0} \frac{I_2^\e}{\e} = - \int_\R \p_x\eta\ g^2\, dx\,.
\end{equation}

We next turn to the term $I_3^\e$ involving both $f$ and $g$ and  split it in two terms $2I_3^\e = I_{31}^\e + I_{32}^\e$ with
$$
I_{31}^\e := \int_{\R} (f_\e+f)\ (g_\e-g)\, dx \quad\text{and}\quad I_{32}^\e := \int_\R (g_\e+g)\ (f_\e-f)\, dx\,.
$$
By \eqref{eq:app},
\begin{align*}
I_{31}^\e=&\int_\R\left(\frac{f\circ T_\e^{-1}}{\p_x T_\e\circ T_\e^{-1}}+f\right)\left(\frac{g\circ S_\e^{-1}}{\p_x S_\e\circ S_\e^{-1}}-g\right)\, dx \\
=&\int_\R\left(\frac{f\circ T_\e^{-1}\circ S_\e}{\p_x T_\e\circ T_\e^{-1}\circ S_\e}+f\circ S_\e\right)\left(g- (g\circ S_\e)\ \p_x S_\e \right)\, dx\\
=&\int_\R\left(\frac{f\circ T_\e^{-1}\circ S_\e}{\p_x T_\e\circ T_\e^{-1}\circ S_\e}+f\circ S_\e\right)\left(g- g\circ S_\e\right)\, dx 
-\e\int_\R \p_x\eta\ (g\circ S_\e)\ \left(\frac{f\circ T_\e^{-1}\circ S_\e}{\p_x T_\e\circ T_\e^{-1}\circ S_\e}+f\circ S_\e\right)\, dx.
\end{align*}
On the one hand, invoking Lemma~\ref{L:1} (with $(h,\zeta)=(g,\eta)$),  we know that $(g-g\circ S_\e)/\e\rightharpoonup - \eta \p_x g$ in $L^2(\R)$ as $\e\to 0$. On the other hand, using again Lemma~\ref{L:1}  as well as Lemma~\ref{le:ap2}, we have that $f\circ S_\e \longrightarrow f$ and $f\circ T_\e^{-1} \circ S_\e \longrightarrow f$ in $L^2(\R)$ as $\e\to 0$, and so does $f\circ T_\e^{-1}\circ S_\e/ (\p_x T_\e\circ T_\e^{-1} \circ S_\e)$ owing to the uniform convergence of $(\p_x T_\e)_\e$ to $1$. Consequently,
\begin{align}\label{L-3}
\lim_{\e\to 0} \frac{I^\e_{31}}{\e} = &-2\int_\R f\ \p_x(\eta g)\, dx,
\end{align}
and similarly
\begin{align}\label{L-4}
\lim_{\e\to 0} \frac{I^\e_{32}}{\e} = &-2\int_\R g\ \p_x(\xi f)\, dx.
\end{align}
Gathering \eqref{L-1}-\eqref{L-4}, it follows from \eqref{rel} that
\begin{equation}\label{Est1}
\lim_{\e\to 0} \frac{\E(f_\e,g_\e)-\E(f,g)}{\e}=-(1+R)\ \int_\R \frac{f^2}{2}\ \p_x\xi\, dx - R\ \int_\R \frac{g^2}{2}\ \p_x\eta\, dx - R\ \int_\R \left[ f\ \p_x(\eta g) + g\ \p_x(\xi f) \right]\, dx. 
\end{equation} 
To handle the terms of \eqref{inequ} involving the Wasserstein distance, we  argue as in \cite[Section~8.4]{Vi03} and write
\begin{align*}
W_2^2(f_\e,f_0)& \leq  \int_\R |\id-T_\e\circ\p_x\varphi|^2\ f_0\, dx = \int_\R |\id-\p_x\varphi - \e\ \xi\circ\p_x\varphi|^2\ f_0\, dx\\
&=\int_\R |\id-\p_x\varphi|^2\ f_0\, dx - 2 \e\ \int_\R (\id-\p_x\varphi)\ (\xi\circ\p_x\varphi)\ f_0\, dx + \e^2\ \int_\R |\xi\circ\p_x\varphi|^2\ f_0\, dx,
\end{align*}
from which we deduce,  according to the definition of $\p_x\varphi$, 
\begin{equation}\label{F-1}
W_2^2(f_\e,f_0) \leq W_2^2(f,f_0) - 2\e\ \int_\R (\id-\p_x\varphi)\ (\xi\circ\p_x\varphi)\ f_0\, dx + \e^2\ \int_\R |\xi\circ\p_x\varphi|^2\ f_0\, dx,
\end{equation}
and similarly
\begin{equation}\label{F-2}
W_2^2(g_\e,g_0) \leq W_2^2(g,g_0) - 2\e\ \int_\R(\id-\p_x\psi)\ (\eta\circ\p_x\psi)\ g_0\, dx + \e^2\ \int_\R |\eta\circ\p_x\psi|^2\ g_0\, dx.
\end{equation}
Summing \eqref{Est1}, \eqref{F-1}, and \eqref{F-2}, we obtain by dividing \eqref{inequ} by $\e$ and letting $\e\to 0$ that 
\begin{equation*}
\begin{aligned}
&\frac{1}{\tau}\ \left[ \int_\R(\id-\p_x\varphi)\ (\xi\circ\p_x\varphi)\ f_0\, dx + \frac{R}{R_\mu}\ \int_\R (\id-\p_x\psi)\ (\eta\circ\p_x\psi)\ g_0\, dx \right]\\
&+(1+R)\ \int_\R \p_x\xi\ \frac{f^2}{2}\, dx + R\ \int_\R \p_x\eta\ \frac{g^2}{2}\, dx + R\ \int_\R \left[ f\ \p_x(\eta g) + g\ \p_x(\xi f) \right]\, dx\leq 0.
\end{aligned}
\end{equation*}
Since the relation is valid for $(\xi,\eta)$ as well as for $(-\xi,-\eta)$, we end up with  
\begin{equation}\label{gau}
\begin{aligned}
&\frac{1}{\tau}\ \left[ \int_\R(\id-\p_x\varphi)\ (\xi\circ\p_x\varphi)\ f_0\, dx + \frac{R}{R_\mu}\ \int_\R (\id-\p_x\psi)\ (\eta\circ\p_x\psi)\ g_0\, dx \right]\\
&+(1+R)\ \int_\R \p_x\xi\ \frac{f^2}{2}\, dx + R\ \int_\R \p_x\eta\ \frac{g^2}{2}\, dx + R\ \int_\R \left[ f\ \p_x(\eta g) + g\ \p_x(\xi f) \right]\, dx = 0
\end{aligned}
\end{equation}
 for all $(\xi,\eta)\in C_0^\infty(\R;\R^2)$. 

Consider now $\Xi\in C_0^\infty(\R)$. For $x\in\R$, we have 
\begin{align*}
\left| \Xi(x)-\Xi(\p_x\varphi(x)) - \p_x\Xi(\p_x\varphi(x))\ (x-\p_x\varphi(x)) \right| =& \left| \int_{\p_x\varphi(x)}^x (x-y)\ \p_x^2\Xi(y)\ dy \right| \\
\le& \|\p_x^2\Xi\|_\infty\ \frac{(x-\p_x\varphi(x))^2}{2}\,.
\end{align*}
Multiplying the above inequality by $f_0(x),$ integrating over $\R,$ and using the definition of $\p_x\varphi$ yield
\begin{equation}\label{ga1}
\left| \int_\R \left[ \Xi(x)-\Xi(\p_x\varphi(x)) - \p_x\Xi(\p_x\varphi(x))\ (x-\p_x\varphi(x)) \right]\ f_0(x)\, dx \right| \le \|\p_x^2\Xi\|_\infty\ \frac{W_2^2(f,f_0)}{2}\,.
\end{equation}
Owing to \eqref{gau} with $(\xi,\eta)=(\p_x \Xi,0)$ and the property $f=\p_x\varphi \# f_0$, we deduce that 
$$
\left| \frac{1}{\tau}\ \int_\R \left(f - f_0 \right)\ \Xi\, dx - (1+R)\ \int_\R \frac{f^2}{2}\ \p_x^2\Xi\, dx - R\ \int_\R g\ \p_x\left( f\ \p_x\Xi \right)\, dx\right|\leq \frac{1}{2}\|\p_x^2\Xi\|_\infty\frac{W_2^2(f,f_0)}{\tau}.
$$
Taking into account that $(f,g)\in H^1(\R;\R^2)$ by Lemma~\ref{Unimin}, we arrive,  after integrating by parts once, to \eqref{eq:sup6}. A similar argument leads to \eqref{eq:sup7}.
\end{proof}

 We next develop further an argument from the proof of \cite[Proposition~2]{Ot98} which allows us to gain regularity on $f$ and $g$ by using the Euler-Lagrange equation.

\begin{cor}\label{co:sup1}
The functions $\sqrt{f}\ \p_x((1+R)f+Rg)$ and $\sqrt{g}\ \p_x(f+g)$ both belong to $L^2(\R)$ and 
\begin{subequations}\label{eq:sup10}
\begin{eqnarray}
\tau\ \left\| \sqrt{f}\ \p_x[(1+R)f+Rg] \right\|_2 & \le & W_2(f,f_0)\,, \label{eq:sup10a} \\
\tau\ R_\mu \left\| \sqrt{g}\ \p_x(f+g) \right\|_2 & \le & W_2(g,g_0)\,. \label{eq:sup10b}
\end{eqnarray}
\end{subequations}
\end{cor}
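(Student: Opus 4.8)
The plan is to use the Euler--Lagrange inequality \eqref{eq:sup6} from Lemma~\ref{le:sup1} together with the $H^1$-regularity of the minimizer $(f,g)$ provided by Lemma~\ref{Unimin}, and to exploit the degenerate weight $f$ in front of the flux term. First I would observe that, since $f\in H^1(\R)\subset C(\R)$ is nonnegative and integrable, it is bounded; thus for $\e>0$ the truncated function $f_\e:=\max\{f-\e,0\}/(\text{something})$ is not quite what I want—rather, the cleaner route is to note that $f\,\p_x\big((1+R)f+Rg\big)\in L^2(\R)$ already (as the product of the bounded function $f$ with the $L^2$ function $\p_x((1+R)f+Rg)$), so the flux $J:=(1+R)f\p_xf+Rf\p_xg$ is a genuine $L^2$ function. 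Then I would rewrite \eqref{eq:sup6} as
\begin{equation*}
\left| \frac{1}{\tau}\int_\R \xi\,(f-f_0)\,dx + \int_\R J\,\p_x\xi\,dx \right| \le \frac{\|\p_x^2\xi\|_\infty}{2\tau}\,W_2^2(f,f_0)\,,\qquad \xi\in C_0^\infty(\R)\,.
\end{equation*}

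Next, the key idea (following \cite[Proposition~2]{Ot98}) is to choose a clever test function. Since $J=\p_x\big(\tfrac12[(1+R)f^2]\big)+Rf\p_xg$ is in $L^2$, and also $\p_x J\in H^{-1}$, I would like to test with $\xi$ essentially equal to an antiderivative of $J/f$ on the positivity set of $f$, i.e. $\p_x\xi \approx \p_x((1+R)f+Rg)$, so that the flux term becomes $\int_\R f\,|\p_x((1+R)f+Rg)|^2\,dx = \|\sqrt f\,\p_x((1+R)f+Rg)\|_2^2$. The obstacle is that $\p_x((1+R)f+Rg)$ need not be compactly supported nor smooth enough to be an admissible test function, so I would approximate: take $\xi_\delta = \chi_\delta\cdot \rho_\delta * \big((1+R)f+Rg\big)$ where $\chi_\delta$ is a smooth cutoff and $\rho_\delta$ a mollifier, plug into the displayed inequality, and control the right-hand side. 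Crucially, on the right-hand side one gets $\|\p_x^2\xi_\delta\|_\infty\,W_2^2(f,f_0)/(2\tau)$; this blows up as $\delta\to0$, which looks fatal. The resolution, exactly as in \cite{Ot98}, is \emph{not} to pass to a limit in that crude bound but to use a finer version: one replaces the second-order Taylor estimate \eqref{ga1} by the observation that, for the optimal map $\p_x\varphi$, the error in the first-order expansion is bounded pointwise by $\|\p_x^2\xi\|_\infty (x-\p_x\varphi(x))^2/2$, and after multiplying by $f_0$ and integrating, one can instead bound the left side of \eqref{eq:sup6} by a Cauchy--Schwarz splitting: write the displayed quantity as $\big|\int_\R (\id-\p_x\varphi)\,(\xi\circ\p_x\varphi)\,f_0\,dx\big|\le W_2(f,f_0)\,\big(\int_\R |\xi\circ\p_x\varphi|^2 f_0\,dx\big)^{1/2} = W_2(f,f_0)\,\|\xi\|_{L^2(f)}$, \emph{using} \eqref{gau} with $(\xi,\eta)=(\p_x\Xi,0)$ before taking absolute values. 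This is the real mechanism and is where the factor $W_2(f,f_0)$ (not $W_2^2$) comes from.

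So the sharp plan is: go back to \eqref{gau} with $(\xi,\eta)=(\p_x\Xi,0)$, which reads
\begin{equation*}
\frac{1}{\tau}\int_\R (\id-\p_x\varphi)\,(\p_x\Xi\circ\p_x\varphi)\,f_0\,dx = -(1+R)\int_\R \p_x^2\Xi\,\frac{f^2}{2}\,dx - R\int_\R g\,\p_x\big(f\,\p_x\Xi\big)\,dx\,;
\end{equation*}
integrating by parts on the right (legitimate since $f,g\in H^1$) the right side equals $\int_\R f\,\p_x\Xi\,\p_x\big((1+R)f+Rg\big)\,dx$ up to sign, while Cauchy--Schwarz on the left gives
\begin{equation*}
\left| \int_\R f\,\p_x\Xi\,\p_x\big((1+R)f+Rg\big)\,dx \right| \le \frac{1}{\tau}\,W_2(f,f_0)\left( \int_\R |\p_x\Xi\circ\p_x\varphi|^2\,f_0\,dx \right)^{1/2} = \frac{1}{\tau}\,W_2(f,f_0)\,\|\p_x\Xi\|_{L^2(f)}\,,
\end{equation*}
where we used $f=\p_x\varphi\#f_0$. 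Now I would choose $\p_x\Xi$ to be a smooth, compactly supported approximation of $\p_x((1+R)f+Rg)$ in $L^2(f)=L^2(\R,f\,dx)$ (possible since $f$ is bounded and $\p_x((1+R)f+Rg)\in L^2(\R)$, hence in $L^2(f)$); the left side then converges to $\|\sqrt f\,\p_x((1+R)f+Rg)\|_2^2$ and the right side to $\tfrac{1}{\tau}W_2(f,f_0)\|\sqrt f\,\p_x((1+R)f+Rg)\|_2$, yielding \eqref{eq:sup10a} after dividing (and \eqref{eq:sup10b} analogously from \eqref{eq:sup7}). The main obstacle, and the point requiring care, is the density/approximation argument: one must verify that functions of the form $\p_x\Xi$ with $\Xi\in C_0^\infty(\R)$ are dense in the closure of $\{\p_x((1+R)f+Rg)\}$ in $L^2(\R,f\,dx)$—or, more simply, that one may directly take $\p_x\Xi$ ranging over $C_0^\infty(\R)$, mollify and truncate $(1+R)f+Rg$, and pass to the limit while controlling the compactly-supported cutoff's contribution using $f\in L^1$ and the $L^2$-bound on the derivatives; this is routine but is the step where the $H^1$-regularity of the minimizer and the boundedness of $f$ are genuinely used.
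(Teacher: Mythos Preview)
Your proposal is correct and captures the essential mechanism: return to the exact Euler--Lagrange identity \eqref{gau} (rather than the already-estimated \eqref{eq:sup6}), apply Cauchy--Schwarz to the transport term to extract the single factor $W_2(f,f_0)$, and then approximate. This is precisely what the paper does up to and including the inequality
\[
\left| \int_\R f\,\xi\,\p_x[(1+R)f+Rg]\, dx \right| \le \frac{W_2(f,f_0)}{\tau}\,\Big( \int_\R \xi^2 f\, dx \Big)^{1/2}, \qquad \xi\in C_0^\infty(\R)\,.
\]
(One small simplification: you restrict to $\xi=\p_x\Xi$, but the identity \eqref{gau} already holds for arbitrary $\xi\in C_0^\infty(\R)$, so there is no need to pass through an antiderivative.)

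Where you and the paper diverge is in the final approximation step. You propose to let $\xi$ run through a sequence in $C_0^\infty(\R)$ converging to $\p_x[(1+R)f+Rg]$ in $L^2(\R)$; since $f\in H^1(\R)\subset L^\infty(\R)$, this also gives convergence in $L^2(\R,f\,dx)$, and both sides pass to the limit directly. The paper instead inserts the explicit test function $\xi=\vartheta/(m^{-1/4}+\chi_m*f)^{1/2}$, uses $L^2$-duality in $\vartheta$, and then sends $m\to\infty$, controlling $\|f/(m^{-1/4}+\chi_m*f)\|_\infty$ via the $H^1$-regularity of $f$. Your route is shorter and avoids the mollifier estimate \eqref{eq:sup13}; the paper's route is more constructive and makes the role of the weight $\sqrt{f}$ visible at every step. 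Both rely in the same way on the $H^1$-regularity from Lemma~\ref{Unimin} (you need $f$ bounded and $\p_x[(1+R)f+Rg]\in L^2$; the paper needs the Lipschitz bound $\|\chi_m*f-f\|_\infty\le C m^{-1/2}\|\p_x f\|_2$).
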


\medskip

It is worth mentioning here that the estimates \eqref{eq:sup10} match exactly the regularity of $(f,g)$ given by the dissipation in the energy inequality \eqref{dissE2}.

\medskip

\begin{proof}
Consider $\xi\in C_0^\infty(\R).$ We infer from \eqref{gau} with $\eta=0$ that, after integrating by parts,
$$
\int_\R \left[ (1+R)\ f\ \p_x f + R\ f\ \p_x g \right]\ \xi\, dx = \frac{1}{\tau}\ \int_\R (x-\p_x\varphi(x))\ (\xi\circ\p_x\varphi)(x)\ f_0(x)\, dx\,.
$$
Since $f=\p_x\varphi\#f_0$, it follows from the Cauchy-Schwarz inequality and \eqref{Brenier} that 
\begin{eqnarray*}
& & \left| \int_\R (x-\p_x\varphi(x))\ (\xi\circ\p_x\varphi)(x)\ f_0(x)\, dx \right| \\
& & \hspace{1cm} \le \left( \int_\R (x-\p_x\varphi(x))^2\ f_0(x)\, dx \right)^{1/2}\ \left( \int_\R (\xi\circ\p_x\varphi)^2(x)\ f_0(x)\, dx \right)^{1/2} \\
& &\hspace{1cm} \le W_2(f,f_0)\ \left( \int_\R \xi^2(x)\ f(x)\, dx \right)^{1/2}\,.
\end{eqnarray*}
Therefore,
\begin{equation}
\left| \int_\R \left[ (1+R)\ f\ \p_x f + R\ f\ \p_x g \right]\ \xi\, dx \right| \le \frac{W_2(f,f_0)}{\tau}\ 
\left( \int_\R \xi^2(x)\ f(x)\, dx \right)^{1/2}\,.\label{eq:sup11}
\end{equation}
Consider next a nonnegative function $\chi\in C_0^\infty(\R)$ such that $\|\chi\|_1=1$ and define $\chi_m(x):=m\chi(mx)$ for $m\ge 1$ and $x\in\R$. Then, $(\chi_m)_{m\ge 1}$ is a sequence of mollifiers in $\R$ and, given $\vartheta\in C_0^\infty(\R)$ and $m\ge 1$, the function $\vartheta/(m^{-1/4} + \chi_m*f)^{1/2}$ belongs to $C_0^\infty(\R)$. Taking $\xi=\vartheta/(m^{-1/4} + \chi_m*f)^{1/2}$ in \eqref{eq:sup11}, we obtain
$$
\left| \int_\R \frac{f\ \p_x[(1+R)f+Rg]}{\sqrt{m^{-1/4} + \chi_m*f}}\ \vartheta\, dx \right| \le \frac{W_2(f,f_0)}{\tau}\ \left\| \frac{f}{m^{-1/4}+\chi_m*f} \right\|_\infty^{1/2}\ \|\vartheta\|_2\,.
$$
The previous inequality being valid for all $\vartheta\in C_0^\infty(\R)$, a duality argument yields
\begin{equation}
\left\| \frac{f\ \p_x[(1+R)f+Rg]}{\sqrt{m^{-1/4} + \chi_m*f}} \right\|_2 \le \frac{W_2(f,f_0)}{\tau}\ \left\| \frac{f}{m^{-1/4}+\chi_m*f} \right\|_\infty^{1/2}\,. \label{eq:sup12}
\end{equation}
Now, since $f\in H^1(\R)$ by Lemma~\ref{Unimin}, we have $\|\chi_m*f-f\|_\infty\le C_\chi\ \|\p_x f\|_2\ m^{-1/2}$ for some constant $C_\chi>0$ depending only on $\chi$ from which we deduce that
\begin{equation}
\left\| \frac{f}{m^{-1/4} + \chi_m*f} \right\|_\infty \le \left\| \frac{f-\chi_m*f}{m^{-1/4} + \chi_m*f} \right\|_\infty + \left\| \frac{\chi_m*f}{m^{-1/4} + \chi_m*f} \right\|_\infty \le 1+ \frac{C_\chi\ \|\p_x f\|_2}{m^{1/4}}\,.\label{eq:sup13}
\end{equation}
In particular, for $x\in\R$,
$$
\left| \frac{f(x)}{\sqrt{m^{-1/4} + \chi_m*f(x)}} \right| \le \left( 1 + \sqrt{C_\chi\ \|\p_x f\|_2} \right)\ \sqrt{f(x)} \in L^2(\R)
$$
and
$$
\lim_{m\to\infty} \frac{f(x)}{\sqrt{m^{-1/4} + \chi_m*f(x)}} = \left\{
\begin{array}{ccl}
0 = \sqrt{f(x)} & \text{ if } & f(x)=0\,,\\
 & &\\
\sqrt{f(x)} & \text{ if } & f(x)>0\,,
\end{array}
\right.
$$
so that
$$
\frac{f}{\sqrt{m^{-1/4} + \chi_m*f}} \longrightarrow \sqrt{f} \quad\text{ in }\quad L^2(\R)
$$
by the Lebesgue dominated convergence theorem. Since $(1+R)f+Rg$ belongs to $H^1(\R)$ by Lemma~\ref{Unimin}, we conclude that 
\begin{equation}
\frac{f}{\sqrt{m^{-1/4} + \chi_m*f}}\ \p_x[(1+R)f+Rg] \longrightarrow \sqrt{f}\ \p_x[(1+R)f+Rg] \quad\text{ in }\quad L^1(\R)\,. \label{eq:sup14}
\end{equation}
Owing to \eqref{eq:sup13} and \eqref{eq:sup14}, we may let $m\to\infty$ in \eqref{eq:sup12} and deduce that $\sqrt{f}\ \p_x[(1+R)f+Rg]\in L^2(\R)$ and satisfies \eqref{eq:sup10a}. The proof of \eqref{eq:sup10b} is similar.
\end{proof}

\subsection{Interpolation}\label{s:inter}

Thanks to the results established in the previous sections, we are now in a position to set up a variational scheme to approximate the solution
 to \eqref{eq:problem}. More precisely, given $(f_0,g_0)\in\cK_2$ and $\tau\in (0,1)$, we define inductively a sequence $(f_\tau^{n}, g_\tau^{n})_{n\ge 0}$ as follows:
\begin{eqnarray}
(f_\tau^0,g_\tau^0) & := & (f_0,g_0)\,, \label{eq:sup20} \\
\cF_\tau^n\left( f_\tau^{n+1},g_\tau^{n+1} \right) & := & \inf_{(u,v)\in\cK_2}{\cF_\tau^n(u,v)}\,, \label{eq:sup21}
\end{eqnarray}
with
$$
\cF_\tau^n(u,v) := \frac{1}{2\tau}\ \left(W_2^2\left( u,f_\tau^n \right) + \frac{R}{R_\mu}\ W_2^2\left( v,g_\tau^n\right) \right) + \E(u,v)\,, \quad (u,v)\in\cK_2\,,
$$
the existence and uniqueness of $\left( f_\tau^{n+1},g_\tau^{n+1} \right)$ being guaranteed by Lemma~\ref{Unimin} for each $n\ge 0$. We next define two interpolation functions $f_\tau$ and $g_\tau$ by \eqref{eq:interp}, i.e. $f_\tau(t):=f_\tau^n$ and $g_\tau(t):=g_\tau^n$ for $t\in [n\tau,(n+1)\tau)$ and $n\ge 0.$ By Lemma~\ref{le:sup1}, we have
\begin{equation}\label{System-n}
\left\{
\begin{aligned}
&\left| \int_\R \left( f^{n}_\tau - f^{n-1}_\tau \right)\ \xi\, dx + \tau\ \int_\R f^{n}_\tau\ \p_x\left( (1+R)\ f^{n}_\tau + R\ g_\tau^n \right)\ \p_x\xi\, dx \right| \leq \frac{\|\p_x^2\xi\|_\infty}{2}\ W_2^2(f_\tau^n,f_\tau^{n-1}),\\[1ex]
&\left| \int_\R \left( g^{n}_\tau - g^{n-1}_\tau \right)\ \xi\, dx + \tau\ R_\mu\ \int_\R g^{n}_\tau\ \p_x\left( g^{n}_\tau + g_\tau^n \right)\ \p_x\xi\, dx \right| \leq \frac{\|\p_x^2\xi\|_\infty}{2}\ W_2^2(g^{n}_\tau,g^{n-1}_\tau),
\end{aligned}\right.
\end{equation}
for all $n\geq1$ and $\xi\in C^\infty_0(\R).$
 Given $T>0$ arbitrary,  we set $N:=[T/\tau].$
 Summing both equations of \eqref{System-n} from $n=1$ to $n=N,$  we find
\begin{equation}\label{Sumf}
\left| \int_\R \left( f_\tau(T) - f_0 \right)\ \xi\, dx \right.  + \left. \int_{\tau}^{(N+1)\tau} \int_\R f_\tau\ \p_x\left( (1+R)\ f_\tau + R\ g_\tau \right)\ \p_x\xi\, dxdt \right| \leq \frac{\|\p_x^2\xi\|_\infty}{2}\ \sum_{n=1}^{N} W_2^2(f^{n}_\tau,f^{n-1}_\tau),
\end{equation}
\begin{equation}\label{Sumg}
\begin{aligned}
\left|\int_\R \left( g_\tau(T) - g_0 \right)\ \xi\, dx \right.  + \left. R_\mu\ \int_{\tau}^{(N+1)\tau} \int_\R g_\tau\ \p_x\left( f_\tau + g_\tau \right)\ \p_x\xi\, dxdt \right| \leq \frac{\|\p_x^2\xi\|_\infty}{2}\ \sum_{n=1}^{N} W_2^2(g^{n}_\tau,g^{n-1}_\tau).
\end{aligned}
\end{equation}

\section{Convergence}\label{s:conv}

We gather in the next lemma various properties of the interpolations $(f_\tau,g_\tau)$ defined 
in Section~\ref{s:inter} which are consequences of Lemma~\ref{Unimin} and Corollary~\ref{co:sup1}.

\begin{lemma}\label{L:Est} There exists a positive constant $C_1$ depending only on 
$R$, $R_\mu$, $f_0$, and $g_0$ such that, for all $t\ge 0$ and $\tau\in (0,1)$, we have
\begin{align}
\label{e1}
 (i)\qquad & \int_\R f_\tau(t)\, dx=\int_\R g_\tau(t)\, dx=1,\\
\label{e2}
 (ii)\qquad & \sum_{n=1}^\infty \left[ W_2^2(f_\tau^n,f_\tau^{n-1}) + W_2^2(g_\tau^n,g_\tau^{n-1}) \right] \leq C_1 \tau , \\
 \label{e3}
 (iii)\qquad & \E(f_\tau(t),g_\tau(t))\leq \E(f_\tau(s),g_\tau(s)), \quad s\in [0,t],\\
 \label{e4}
 (iv)\qquad & \int_\R \left( f_\tau + g_\tau \right)(t,x)\ x^2\, dx\leq C_1\ (1+t),\\
 \label{e5}
 (v)\qquad & \int_\tau^t \left[ \|\p_x f_\tau(s)\|_2^2 + \|\p_x g_\tau(s)\|_2^2 \right]\, ds \leq C_1\ (1+t),\\
 \label{e6}
 (vi)\qquad& \int_\tau^\infty \int_\R f_\tau\ \left| \p_x\left[ (1+R)\ f_\tau + R\ g_\tau \right] \right|^2\, dxds \leq C_1,\\
\label{e7}
  (vii)\qquad& \int_\tau^\infty \int_\R g_\tau\ \left| \p_x\left( f_\tau + g_\tau \right) \right|^2\, dxds \leq C_1.
\end{align}
\end{lemma}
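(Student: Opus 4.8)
\textbf{Proof plan for Lemma~\ref{L:Est}.}
The plan is to extract each estimate from the variational scheme, working down through the list in order and always summing the one-step estimates of Section~\ref{s:varsch} over consecutive time steps. Item $(i)$ is immediate since each $(f_\tau^n,g_\tau^n)\in\cK_2$ by construction, and $\cK$ consists of probability densities. For item $(iii)$, I would compare the candidate $(f_\tau^{n-1},g_\tau^{n-1})$ with the minimizer $(f_\tau^n,g_\tau^n)$ in the functional $\cF_\tau^{n-1}$: the minimizing property \eqref{eq:sup21} gives
$$
\frac{1}{2\tau}\left( W_2^2(f_\tau^n,f_\tau^{n-1}) + \frac{R}{R_\mu} W_2^2(g_\tau^n,g_\tau^{n-1}) \right) + \E(f_\tau^n,g_\tau^n) \le \E(f_\tau^{n-1},g_\tau^{n-1}),
$$
so in particular $\E(f_\tau^n,g_\tau^n)\le\E(f_\tau^{n-1},g_\tau^{n-1})$, which iterates to $(iii)$. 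The same chain of inequalities, summed from $n=1$ to $n=N$ and telescoping in $\E$, yields
$$
\frac{1}{2\tau}\sum_{n=1}^N \left( W_2^2(f_\tau^n,f_\tau^{n-1}) + \frac{R}{R_\mu} W_2^2(g_\tau^n,g_\tau^{n-1}) \right) \le \E(f_0,g_0) - \E(f_\tau^N,g_\tau^N) \le \E(f_0,g_0),
$$
where the last step uses $\E\ge 0$; multiplying by $2\tau$ and absorbing the constant $\min\{1,R/R_\mu\}$ gives item $(ii)$ with $C_1$ depending only on $\E(f_0,g_0)$, hence on $R$, $f_0$, $g_0$.

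For item $(iv)$, I would use the elementary bound already exploited in the proof of Lemma~\ref{Unimin}, namely $x^2\le 2y^2 + 2|x-y|^2$ integrated against an optimal plan $\pi\in\Pi(f_\tau^n,f_\tau^{n-1})$, to get
$$
\int_\R f_\tau^n(x) x^2\,dx \le 2\int_\R f_\tau^{n-1}(x) x^2\,dx + 2 W_2^2(f_\tau^n,f_\tau^{n-1}),
$$
but since I want a bound linear in $t$ rather than exponential in $n$, I would instead use the triangle inequality for $W_2$ together with $\left(\int x^2 h\,dx\right)^{1/2} = W_2(h,\delta_0)$, so that
$$
\left(\int_\R f_\tau^n x^2\,dx\right)^{1/2} \le \left(\int_\R f_0 x^2\,dx\right)^{1/2} + \sum_{j=1}^n W_2(f_\tau^j,f_\tau^{j-1}).
$$
By Cauchy--Schwarz the sum is bounded by $\big(n\sum_{j\ge1}W_2^2(f_\tau^j,f_\tau^{j-1})\big)^{1/2}\le (n C_1\tau)^{1/2}$ using $(ii)$; since $n\le N+1\le t/\tau + 1$, this is $\le (C_1(t+\tau))^{1/2}$, and squaring gives $(iv)$ (the same for $g_\tau$, and one adds the two). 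Items $(vi)$ and $(vii)$ come from summing the squared estimates \eqref{eq:sup10a}--\eqref{eq:sup10b} of Corollary~\ref{co:sup1} applied at each step: for each $n\ge 1$,
$$
\tau^2 \int_\R f_\tau^n \left| \p_x\big[(1+R)f_\tau^n + R g_\tau^n\big] \right|^2 dx \le W_2^2(f_\tau^n,f_\tau^{n-1}),
$$
and dividing by $\tau$, summing over $n\ge 1$ on the relevant time interval, and invoking $(ii)$ bounds the integrals over $(\tau,\infty)$ by a constant; similarly for $(vii)$ with the extra factor $R_\mu^2$ absorbed into $C_1$.

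Item $(v)$ is the one requiring the second Liapunov functional and is where I expect the main subtlety: the estimate \eqref{Heat3} of Lemma~\ref{Unimin} applied at step $n$ reads
$$
\tau\left( \|\p_x f_\tau^n\|_2^2 + R\|\p_x(f_\tau^n+g_\tau^n)\|_2^2 \right) \le H(f_\tau^{n-1}) - H(f_\tau^n) + \frac{R}{R_\mu}\big( H(g_\tau^{n-1}) - H(g_\tau^n) \big),
$$
which telescopes upon summation from $n=1$ to $n=N$ to give a bound by $H(f_0) - H(f_\tau^N) + \frac{R}{R_\mu}(H(g_0) - H(g_\tau^N))$. The obstacle is that $H$ is not bounded below on $\cK$ by an absolute constant, only controlled in terms of the second moment; so I would invoke the inequality \eqref{eq:ap2} (cited in the proof of Lemma~\ref{Unimin} and stated in the Appendix) bounding $H(h)$ from below in terms of $\int x^2 h\,dx$, and then feed in the moment bound $(iv)$ already established to control $-H(f_\tau^N)$ and $-H(g_\tau^N)$ by $C_1(1+t)$. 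Combining and noting $H(f_0),H(g_0)$ are finite since $(f_0,g_0)\in\cK_2$ yields $(v)$. The left side of $(v)$ is $\int_\tau^t(\cdots)\,ds$ rather than $\int_0^t$ precisely because the $n=0$ step is not covered, matching the summation from $n=1$; care is needed that $\p_x(f_\tau^n+g_\tau^n)\in L^2$ and $\p_x g_\tau^n = \p_x(f_\tau^n+g_\tau^n) - \p_x f_\tau^n$ is then also in $L^2$, so $\|\p_x g_\tau^n\|_2^2 \le 2\|\p_x f_\tau^n\|_2^2 + 2\|\p_x(f_\tau^n+g_\tau^n)\|_2^2$ is controlled, which is what converts the mixed bound \eqref{Heat3} into the form stated in $(v)$.
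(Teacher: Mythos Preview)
Your proposal is correct and follows essentially the same route as the paper: items $(i)$--$(iii)$ come from the one-step minimizing inequality and telescoping, $(vi)$--$(vii)$ from summing Corollary~\ref{co:sup1} against $(ii)$, and $(v)$ from summing \eqref{Heat3}, controlling $-H(f_\tau^N),-H(g_\tau^N)$ via \eqref{eq:ap2} and the moment bound $(iv)$. The only cosmetic difference is in $(iv)$: the paper uses \eqref{eq:1} to get $\int x^2 f_\tau^N\,dx\le 2\int x^2 f_0\,dx+2W_2^2(f_\tau^N,f_0)$ and then bounds $W_2^2(f_\tau^N,f_0)\le N\sum_{n=1}^N W_2^2(f_\tau^n,f_\tau^{n-1})$, whereas you write the same triangle-inequality/Cauchy--Schwarz step through the identity $W_2(h,\delta_0)=(\int x^2 h\,dx)^{1/2}$; the two are equivalent.
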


\begin{proof} The property \eqref{e1} readily follows from the fact that $(f_\tau^n,g_\tau^n)\in\cK_2$ for all $n\ge 0$ and $\tau>0$. 
Next, for $\tau>0$ and $n\ge 1$, the minimizing property of $(f_\tau^n,g_\tau^n)$ ensures that
\begin{equation}\label{eq:qe}
\E(f_\tau^n,g_\tau^n)+\frac{1}{2\tau}\left[ W_2^2(f_\tau^n,f_\tau^{n-1}) + \frac{R}{R_\mu}\ W_2^2(g_\tau^n,g_\tau^{n-1}) \right] \leq \E(f_\tau^{n-1},g_\tau^{n-1}).
\end{equation}
 Given $t\in (0,\infty)$ and $s\in [0,t]$, we set $N:=[t/\tau]$, $\nu:=[s/\tau]$, and sum \eqref{eq:qe} from $n=\nu+1$ up to $n=N$ to obtain, 
since $(f_\tau,g_\tau)(t)=(f_\tau^N,g_\tau^N)$ and $(f_\tau,g_\tau)(s)=(f_\tau^\nu,g_\tau^\nu)$,
\begin{equation}\label{eq:qee}
\E(f_\tau(t),g_\tau(t)) + \frac{1}{2\tau}\ \sum_{n=\nu+1}^N \left[ W_2^2(f_\tau^n,f_\tau^{n-1})+\frac{R}{R_\mu}\ W_2^2(g_\tau^n,g_\tau^{n-1}) \right] 
\leq \E(f_\tau(s),g_\tau(s)).
\end{equation}
The monotonicity property \eqref{e3} is a straightforward consequence of \eqref{eq:qee} while the nonnegativity of $\E$ and \eqref{eq:qee} with $s=\nu=0$ give
$$
\sum_{n=1}^N \left[ W_2^2(f_\tau^n,f_\tau^{n-1}) + \frac{R}{R_\mu}\ W_2^2(g_\tau^n,g_\tau^{n-1}) \right] \leq 2 \E(f_0,g_0)\ \tau\,.
$$
Since the right-hand side of the above inequality does not depend on $N$, we obtain \eqref{e2}. In order to prove \eqref{e4}, we
 combine \eqref{eq:1} and \eqref{e2} and obtain for $t\ge 0$ with $N:=[t/\tau]$ 
\begin{eqnarray*}
\int_R f_\tau(t,x)\ x^2\, dx & = & \int_\R f_\tau^N(x)\ x^2\, dx \leq 2\int_\R f_0(x)\ x^2\, dx + 2 W_2^2(f_\tau^N,f_0) \\ 
& \leq & 2 \int_\R f_0(x)\ x^2\, dx + 2 N\ \sum_{n=1}^N W_2^2(f_\tau^n,f_\tau^{n-1}) \\
& \le & 2 \int_\R f_0(x)\ x^2\, dx +  4N \tau\ \E(f_0,g_0) \le C\ (1+t)\,.
\end{eqnarray*}

We next infer from \eqref{Heat3} that, for $n\ge 1$,
\[
\tau\ \left( \|\p_x f_\tau^{n}\|_2^2 + R\ \|\p_x (f_\tau^{n}+g_\tau^{n})\|_2^2 \right) \leq H(f_\tau^{n-1}) - H(f_\tau^{n}) + \frac{R}{R_\mu}\ 
\left( H(g_\tau^{n-1}) - H(g_\tau^{n}) \right).
\]
 Let $N\ge 1$. Summation from $n=1$ to $N$ yields
\begin{align}
&\int_\tau^{(N+1)\tau} \left( \|\p_xf_\tau(s)\|_2^2 + R\ \|\p_x(f_\tau+g_\tau)(s)\|_2^2 \right)\, ds  \leq  H(f_0)-H(f_\tau(N\tau))  + \frac{R}{R_\mu}\ \left(H(g_0)-H(g_\tau(N\tau)) \right).\label{Hes}
\end{align}
It now follows from Lemma~\ref{le:ap1}, \eqref{e1}, \eqref{e4}, and \eqref{Hes} that
\begin{align*}
&\int_\tau^{(N+1)\tau} \left( \|\p_xf_\tau(s)\|_2^2 + R\ \|\p_x(f_\tau+g_\tau)(s)\|_2^2 \right)\, ds \nonumber\\
& \hspace{1cm} \leq  H(f_0) + \frac{R}{R_\mu}\ H(g_0) + \frac{(R+R_\mu) C_\ell}{R_\mu} + \int_\R (1+x^2)\ \left( f_\tau(N\tau) 
+ \frac{R}{R_\mu}\ g_\tau(N\tau) \right)\leq C\ (1+ N\tau)\,,
\end{align*}
which entails the validity of \eqref{e5} for $t\in [N\tau,(N+1)\tau)$.

We finish the  proof by showing \eqref{e6} and \eqref{e7}. 
 By Corollary~\ref{co:sup1}, we have for $n\ge 1$
$$
\tau^2\ \left\| \sqrt{f_\tau^n}\ \p_x\left[ (1+R)\ f_\tau^n + R\ g_\tau^n \right] \right\|_2^2 \le W_2^2(f_\tau^n,f_\tau^{n-1})\,.
$$
Summing over $n\ge 1$ and using \eqref{e2} give
$$
\sum_{n=1}^\infty \tau^2\ \left\| \sqrt{f_\tau^n}\ \p_x\left[ (1+R)\ f_\tau^n + R\ g_\tau^n \right] \right\|_2^2 
\le \sum_{n=1}^\infty W_2^2(f_\tau^n,f_\tau^{n-1}) \le C_1\tau\,,
$$
whence \eqref{e6}. The proof of \eqref{e7} also relies on Corollary~\ref{co:sup1} and is similar.
\end{proof}

\subsection{Compactness}

 We now turn to the compactness properties of $(f_{\tau})_{\tau>0}$ and $(g_{\tau})_{\tau>0}$ 
and point out that the nonlinearity of \eqref{eq:S2} requires strong compactness. 
 We first observe that the compactness with respect to the space variable $x$ 
is granted by \eqref{e5}  thanks to the following lemma.

\begin{lemma}\label{le:comp}
The spaces $H^1(\R)\cap L^1(\R,(1+x^2)\,dx)$ and $L^2(\R)\cap L^1(\R,(1+x^2)\,dx)$ are compactly embedded in $L^2(\R)$ and $H^{-3}(\R)$, respectively.
\end{lemma}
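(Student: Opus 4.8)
The plan is to prove the two compact embeddings separately, using in both cases the same two-part strategy: control the behaviour at spatial infinity by the moment bound and control the local regularity by the Sobolev / dual-Sobolev norm, then invoke a standard local compactness theorem (Rellich--Kondrachov) to handle bounded intervals.

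For the first embedding, $H^1(\R)\cap L^1(\R,(1+x^2)\,dx)\hookrightarrow L^2(\R)$, I would take a sequence $(h_k)_k$ bounded in $H^1(\R)\cap L^1(\R,(1+x^2)\,dx)$, say with $\|h_k\|_{H^1}+\|h_k\|_{L^1(\R,(1+x^2)dx)}\le M$. Fix $\e>0$. For the tail, on $\{|x|>\rho\}$ one interpolates: $\|h_k\|_{L^2(\{|x|>\rho\})}^2\le \|h_k\|_{L^\infty}\,\|h_k\|_{L^1(\{|x|>\rho\})}\le C\,\|h_k\|_{H^1}\,\rho^{-2}\int_\R x^2|h_k|\,dx\le C M^2\rho^{-2}$, using the one-dimensional Sobolev embedding $H^1(\R)\hookrightarrow L^\infty(\R)$; choose $\rho$ so large that this is $<\e$. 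On the fixed interval $(-\rho,\rho)$, the embedding $H^1((-\rho,\rho))\hookrightarrow\hookrightarrow L^2((-\rho,\rho))$ is compact, so a subsequence converges in $L^2((-\rho,\rho))$; a diagonal argument over $\rho\in\N$ together with the uniform tail bound yields a subsequence that is Cauchy in $L^2(\R)$. This gives strong $L^2(\R)$ compactness.

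For the second embedding, $L^2(\R)\cap L^1(\R,(1+x^2)\,dx)\hookrightarrow H^{-3}(\R)$, I would argue by duality. Let $(h_k)_k$ be bounded in $L^2(\R)\cap L^1(\R,(1+x^2)\,dx)$. The tail estimate in $H^{-3}$ comes from the fact that $H^3(\R)\hookrightarrow C^2(\R)$ with decay, so for $\phi\in H^3(\R)$ with $\|\phi\|_{H^3}\le1$ one has $|\phi(x)|\le C(1+x^2)^{-1}\|\phi\|_{H^3}$ is too strong, so instead: split $\langle h_k,\phi\rangle=\int_{|x|\le\rho}h_k\phi+\int_{|x|>\rho}h_k\phi$; the second term is bounded by $\|\phi\|_\infty\int_{|x|>\rho}|h_k|\le C\|\phi\|_{H^3}\rho^{-2}\int_\R x^2|h_k|$, hence $\le CM\rho^{-2}$. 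For the local part, $L^2((-\rho,\rho))\hookrightarrow\hookrightarrow H^{-3}((-\rho,\rho))$ (indeed $L^2\hookrightarrow\hookrightarrow H^{-1}$ locally by Rellich duality, and $H^{-1}\hookrightarrow H^{-3}$), so a subsequence of $(\mathbf 1_{(-\rho,\rho)}h_k)_k$ converges in $H^{-3}(\R)$; combining with the uniform tail bound and a diagonal extraction over $\rho\in\N$ shows $(h_k)_k$ has a subsequence convergent in $H^{-3}(\R)$.

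The main obstacle is making the tail/interior splitting interact cleanly with the negative-order norm in the second part: one must check that the truncation $h\mapsto \mathbf 1_{(-\rho,\rho)}h$ does not destroy the estimate and that convergence in $H^{-3}((-\rho,\rho))$ localizes correctly to convergence of the full sequence tested against all of $H^3(\R)$ — this is where one uses that test functions can be cut off and that the moment bound kills the error uniformly in $k$. Everything else is a routine combination of the $1$D Sobolev inequality $\|h\|_\infty\le C\|h\|_{H^1}$, Chebyshev's inequality to exploit the second moment, and Rellich--Kondrachov on bounded intervals.
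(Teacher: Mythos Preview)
Your argument for the first embedding is correct and essentially the same as the paper's: control the $L^2$-tail by interpolating $\|h_k\|_\infty\le C\|h_k\|_{H^1}$ against the second moment, then use local compactness on bounded intervals. The only cosmetic difference is that the paper invokes Arzel\`a--Ascoli via the embedding $H^1(\R)\hookrightarrow C^{1/2}(\R)$ rather than Rellich--Kondrachov.

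For the second embedding your approach is correct but genuinely different from the paper's. The paper passes to the Fourier side: the bounds on $(h_k)$ in $L^2(\R)\cap L^1(\R,(1+x^2)\,dx)$ translate into bounds on $(\mathcal{F}h_k)$ in $L^2(\R)\cap W^{2,\infty}(\R)$, and then the \emph{same} tail-plus-local-compactness argument as in the first part yields precompactness of $(\mathcal{F}h_k)$ in the weighted space $L^2(\R,(1+\xi^2)^{-3}\,d\xi)$, which is exactly $H^{-3}(\R)$. Your route stays on the physical side and uses Rellich duality on bounded intervals, at the cost of having to justify --- as you correctly flag --- that convergence of $\mathbf{1}_{(-\rho,\rho)}h_k$ in $H^{-3}((-\rho,\rho))$ (i.e.\ in $(H^3_0((-\rho,\rho)))^*$) transfers to convergence in $H^{-3}(\R)$; this is done by multiplying the test function $\phi\in H^3(\R)$ by a fixed smooth cutoff equal to $1$ on $[-\rho,\rho]$, so that the pairing is unchanged while $\phi$ is replaced by an element of $H^3_0$ on a slightly larger interval. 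The paper's Fourier trick sidesteps this technicality entirely and is more economical; your argument is more elementary in that it avoids Fourier analysis.
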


\begin{proof}
Let us first consider a bounded sequence $(h_i)_{i\ge 1}$ in $H^1(\R)\cap L^1(\R,(1+x^2)\,dx)$. 
On the one hand, since $H^1(\R)$ is continuously embedded in $L^\infty(\R)$ and $C^{1/2}(\R)$, 
the Arzel{\`a}-Ascoli theorem implies that there are $h\in H^1(\R)$ and a subsequence  of $(h_i)_{i\ge 1}$ (not relabeled),
 such that $(h_i)_{i\ge 1}$ converges to $h$ in $C([-R,R])$ for all $R>0$. 
On the other hand, using once more the embedding of $H^1(\R)$ in $L^\infty(\R)$, we have  for $R>1$
\begin{align*}
\int_{\R}|h_i(x)-h(x)|^2\, dx\leq& \int_{\{|x|\leq R\}} |h_i(x)-h(x)|^2\, dx + \int_{\{|x|> R\}} |h_i(x)-h(x)|^2\, dx\\
\leq&2R\ \|h_i-h\|^2_{C([-R,R])} + \frac{1}{R^2}\ \|h_i-h\|_\infty\ \int_\R x^2\ |h_i(x)-h(x)|\, dx\\
\leq &2R\ \|h_i-h\|^2_{C([-R,R])} + \frac{2}{R^2}\ \sup_{i\ge 1}\left\{ \|h_i\|_\infty\ \int_\R x^2\ |h_i(x)|\, dx \right\}\,.
\end{align*}
 Letting first $i\to\infty$ and then $R\to \infty$ shows that $(h_i)_{i\ge 1}$ converges to $h$ in $L^2(\R)$.

Next, let $(h_i)_{i\ge 1}$ be a bounded sequence in $L^2(\R)\cap L^1(\R,(1+x^2)\,dx)$ and denote the Fourier transform of $h_i$ by 
$\mathcal{F}h_i$ for $i\ge 1$. A straightforward consequence of the bounds for $(h_i)_{i\ge 1}$ is that $(\mathcal{F}h_i)_{i\ge 1}$ 
is bounded in $L^2(\R)\cap W^{2,\infty}(\R)$. Arguing as above, this implies that $(\mathcal{F}h_i)_{i\ge 1}$ is relatively compact 
in $L^2(\R, (1+x^2)^{-3}\, dx)$. Coming back to the original variable, $(h_i)_{i\ge 1}$ is relatively compact in $H^{-3}(\R)$ as claimed.
\end{proof}

We next  turn to the compactness in time and prove the following result:

\begin{lemma}\label{le:sup3}
There is a positive constant $C_2$ depending only on $R$, $R_\mu$, $f_0$, and $g_0$ such that, for $\tau\in (0,1)$ and $(t,s)\in [0,\infty)\times [0,\infty)$,
\begin{equation}
\| f_\tau(t) - f_\tau(s) \|_{H^{-3}} + \| g_\tau(t) - g_\tau(s) \|_{H^{-3}} \le C_2\  \sqrt{|t-s| + \tau}. \label{eq:sup100}
\end{equation}
\end{lemma}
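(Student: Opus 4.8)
The plan is to estimate $\|f_\tau(t)-f_\tau(s)\|_{H^{-3}}$ (and symmetrically for $g_\tau$) by first reducing to the case when $t$ and $s$ lie in grid intervals and then telescoping the one-step estimates coming from Lemma~\ref{le:sup1}. Without loss of generality assume $s\le t$, and set $m:=[s/\tau]$, $N:=[t/\tau]$, so that $f_\tau(s)=f_\tau^m$ and $f_\tau(t)=f_\tau^N$. The key elementary bound is the one-step estimate: for a test function $\xi\in C_0^\infty(\R)$ and each $n\ge 1$, the first inequality in \eqref{System-n} gives
\[
\left| \int_\R (f_\tau^n - f_\tau^{n-1})\,\xi\, dx \right| \le \tau\, \|\p_x\xi\|_\infty \int_\R f_\tau^n\, \left| \p_x\left[(1+R)f_\tau^n + R g_\tau^n\right] \right|\, dx + \frac{\|\p_x^2\xi\|_\infty}{2}\, W_2^2(f_\tau^n,f_\tau^{n-1}).
\]
Using $\|\p_x\xi\|_\infty + \|\p_x^2\xi\|_\infty \le C\|\xi\|_{H^3}$ (by Sobolev embedding in one dimension) and the Cauchy--Schwarz inequality $\int f_\tau^n |\p_x[\cdots]| \le \big(\int f_\tau^n\big)^{1/2}\big(\int f_\tau^n|\p_x[\cdots]|^2\big)^{1/2} = \big(\int f_\tau^n|\p_x[\cdots]|^2\big)^{1/2}$ (recall $\int f_\tau^n\, dx=1$ by \eqref{e1}), this reads
\[
\| f_\tau^n - f_\tau^{n-1} \|_{H^{-3}} \le C\tau\, \left( \int_\R f_\tau^n\, \big| \p_x[(1+R)f_\tau^n + R g_\tau^n] \big|^2\, dx \right)^{1/2} + C\, W_2^2(f_\tau^n,f_\tau^{n-1}).
\]

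Now I would sum over $n=m+1,\dots,N$ and use the triangle inequality in $H^{-3}$. The Wasserstein term is harmless: $\sum_{n\ge 1} W_2^2(f_\tau^n,f_\tau^{n-1}) \le C_1\tau$ by \eqref{e2}, so its contribution to $\|f_\tau(t)-f_\tau(s)\|_{H^{-3}}$ is at most $C C_1 \tau \le C\sqrt{\tau}$ for $\tau\in(0,1)$. For the main term, apply Cauchy--Schwarz in $n$:
\[
\sum_{n=m+1}^N \tau\, \left( \int_\R f_\tau^n \big| \p_x[\cdots] \big|^2 dx \right)^{1/2} \le \left( (N-m)\tau \right)^{1/2} \left( \sum_{n=m+1}^N \tau \int_\R f_\tau^n \big|\p_x[\cdots]\big|^2 dx \right)^{1/2} \le \left( |t-s|+\tau \right)^{1/2} C_1^{1/2},
\]
where I used $(N-m)\tau \le t - s + \tau \le |t-s|+\tau$ and the bound \eqref{e6} (which controls precisely $\sum_{n\ge 1}\tau\int f_\tau^n|\p_x[(1+R)f_\tau^n+Rg_\tau^n]|^2 \le C_1$). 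Combining, $\|f_\tau(t)-f_\tau(s)\|_{H^{-3}} \le C(|t-s|+\tau)^{1/2} + C\tau \le C_2\sqrt{|t-s|+\tau}$. The estimate for $g_\tau$ is identical, using the second line of \eqref{System-n} together with \eqref{e2} and \eqref{e7}; note the factor $R_\mu$ is absorbed into the constant.

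I do not expect a genuine obstacle here: the only points requiring a little care are (i) making sure the test-function constants $\|\p_x\xi\|_\infty$, $\|\p_x^2\xi\|_\infty$ are dominated by $\|\xi\|_{H^3}$ so that the dual pairing estimate legitimately produces an $H^{-3}$ bound, and (ii) getting the power of $\tau$ right when $|t-s|<\tau$, i.e. when $m=N$ and the sum is empty — in that case $f_\tau(t)=f_\tau(s)$ and the left side vanishes, so \eqref{eq:sup100} holds trivially. The $\sqrt{\cdot}$ rather than linear modulus of continuity is the best one can hope for and is exactly what the Cauchy--Schwarz step in $n$ delivers; it is what will be needed (together with Lemma~\ref{le:comp}) to invoke an Arzel\`a--Ascoli / Aubin--Lions type argument for the space-time compactness of $(f_\tau,g_\tau)$ in the next step of the convergence proof.
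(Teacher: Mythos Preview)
Your proof is correct and follows essentially the same route as the paper's: both telescope the one-step Euler--Lagrange estimate \eqref{System-n}, split $f_\tau^n|\p_x[\cdots]|$ via Cauchy--Schwarz using $\|f_\tau^n\|_1=1$, control the summed dissipation by \eqref{e6} and the Wasserstein remainder by \eqref{e2}, and pass from $W^{2,\infty}$ to $H^{-3}$ via Sobolev embedding. The only cosmetic difference is that the paper keeps the test function fixed and estimates the full pairing before dualizing, whereas you dualize each increment first and then sum the $H^{-3}$ norms; the resulting bounds are identical.
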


\begin{proof}
Consider $t\in (0,\infty)$, $s\in [0,t]$, and  define the integers $N:=[t/\tau]$ and $\nu:=[s/\tau]$. 
Either $N=\nu$ and $f_\tau(t)-f_\tau(s)=0$ satisfies \eqref{eq:sup100} or $N\ge \nu+1$ and it follows 
from \eqref{System-n} that, for $n\in\{\nu+1,\cdots,N\}$ and $\xi\in C_0^\infty(\R)$,
\begin{align*}
\left| \int_\R (f^n_{\tau}-f^{n-1}_{\tau})\ \xi\, dx \right| \le &\int_{n\tau}^{(n+1)\tau} \int_\R  f_\tau(s)\ 
\left| \p_x\left[ (1+R)\ f_\tau + R\ g_\tau \right](s) \right|\ \left| \p_x\xi \right|\, dx\,ds \\
& + \frac{\|\p_x^2\xi\|_\infty}{2}\ W_2^2(f^n_\tau,f^{n-1}_\tau)
\end{align*}
Summing the above inequality from $n=\nu+1$ to $n=N$ and using \eqref{e1}, \eqref{e3}, \eqref{e6}, and the Cauchy-Schwartz inequality, we are led to 
\begin{eqnarray*}
\left| \int_\R (f_\tau(t)-f_\tau(s))\ \xi\, dx \right| & = & \left| \int_\R (f^N_{\tau}-f^{\nu}_{\tau})\ \xi\, dx \right| 
\le \sum_{n=\nu+1}^N \left| \int_\R (f^n_{\tau}-f^{n-1}_{\tau})\ \xi\, dx \right| \\
& \le & \int_{(\nu+1)\tau}^{(N+1)\tau} \int_\R f_\tau(s)\ \left| \p_x\left[ (1+R)\ f_\tau + R\ g_\tau \right](s) \right|\ \left| \p_x\xi \right|\, dx \, ds\\ 
& & + \frac{\|\p_x^2\xi\|_\infty}{2}\ \sum_{n=\nu+1}^N W_2^2(f^n_\tau,f^{n-1}_\tau) \\
& \le & \|\p_x\xi\|_\infty\ \int_{(\nu+1)\tau}^{(N+1)\tau} \|f_\tau(s)\|_1^{1/2}\ \left\| \sqrt{f_\tau}\ \p_x\left[ (1+R)\ f_\tau + R\ g_\tau \right](s) \right\|_2\, ds \\
& & + C_1\ \tau\ \|\p_x^2\xi\|_\infty \\
& \le & C\ \|\xi\|_{W^{2,\infty}}\ \left( \sqrt{(N-\nu)\tau} + \tau \right) \\ 
& \le & C\ \|\xi\|_{W^{2,\infty}}\ \left( \sqrt{t-s+\tau} + \tau \right).
\end{eqnarray*}
Since $H^3(\R)$ is continuously embedded in $W^{2,\infty}(\R)$, the claimed
 estimate for $f_\tau(t)-f_\tau(s)$ follows by a density argument.
 A similar computation relying on \eqref{System-n}, \eqref{e1}, \eqref{e3}, and \eqref{e7} gives the same 
estimate for $g_\tau(t)-g_\tau(s)$ and completes the proof of Lemma~\ref{le:sup3}.
\end{proof}

 We are now in a position to establish the strong compactness of $(f_\tau,g_\tau)_{\tau>0}$ in $L^2((0,T)\times\R)$ for all $T>0$ as announced in \eqref{T1}. 

\begin{lemma}\label{le:scomp} 
There are a sequence $(\tau_k)_{k\ge 1}$, $\tau_k\to 0$, and functions $f$ and $g$ in $C([0,\infty);H^{-3}(\R))$ such that, for all $t\ge 0$,
\begin{align}
& & \left( f_{\tau_k}(t),g_{\tau_k}(t) \right) \longrightarrow (f(t),g(t)) \quad \text{ in } \quad  H^{-3}(\R;\R^2)\,, \label{be0} \\
& & \left( f_{\tau_k},g_{\tau_k} \right) \longrightarrow (f,g) \quad \text{ in } \quad L^2((0,t)\times\R;\R^2)\,, \label{be1} \\
& & \left( f_{\tau_k},g_{\tau_k} \right) \longrightarrow (f,g) \quad \text{ a.e. in } \quad (0,\infty)\times\R\,. \label{be1b}
\end{align}
\end{lemma}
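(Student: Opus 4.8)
The plan is to extract, from the uniform estimates in Lemma~\ref{L:Est} together with the time-equicontinuity in $H^{-3}(\R)$ provided by Lemma~\ref{le:sup3}, two convergences: a pointwise-in-time convergence in the weak topology $H^{-3}(\R)$ via a refined Arzel\`a--Ascoli argument, and then an upgrade to strong convergence in $L^2$ by interpolating between the $H^{-3}$ convergence and the uniform $H^1$-bound of \eqref{e5}, using the compact embedding of Lemma~\ref{le:comp}.

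First I would set up the Arzel\`a--Ascoli step. Fix $T>0$. By \eqref{e3} and the nonnegativity of $\E$, $(f_\tau(t),g_\tau(t))$ is bounded in $L^2(\R;\R^2)$ uniformly in $t\ge 0$ and $\tau\in(0,1)$; by \eqref{e4}, it is also bounded in $L^1(\R,(1+x^2)\,dx;\R^2)$ on $[0,T]$. Hence, for each fixed $t$, $\{(f_\tau(t),g_\tau(t))\}_{\tau}$ lies in a fixed bounded subset of $L^2(\R)\cap L^1(\R,(1+x^2)\,dx)$, which is relatively compact in $H^{-3}(\R)$ by Lemma~\ref{le:comp}. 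Combined with the equicontinuity estimate \eqref{eq:sup100} of Lemma~\ref{le:sup3}, a version of the Arzel\`a--Ascoli theorem for maps into the metric space $H^{-3}(\R;\R^2)$ (applied on $[0,T]$ and then diagonalizing over $T=1,2,\dots$) yields a sequence $\tau_k\to 0$ and limits $f,g\in C([0,\infty);H^{-3}(\R))$ with $(f_{\tau_k}(t),g_{\tau_k}(t))\to(f(t),g(t))$ in $H^{-3}(\R;\R^2)$ locally uniformly in $t$ — in particular for every $t\ge 0$ — which is \eqref{be0}. One should record here that, since $(f_\tau(t),g_\tau(t))$ is bounded in $L^2(\R;\R^2)$, the limit $(f(t),g(t))$ actually lies in $L^2(\R;\R^2)$ for every $t$, and the $H^{-3}$-convergence combined with this $L^2$-bound upgrades to weak convergence in $L^2(\R;\R^2)$.

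Next I would prove the strong $L^2((0,t)\times\R)$-convergence \eqref{be1}. By \eqref{e5}, $(f_{\tau_k})_k$ and $(g_{\tau_k})_k$ are bounded in $L^2((\tau_k,t);H^1(\R))$; together with \eqref{e4} they are bounded in $L^2((\tau_k,t);H^1(\R)\cap L^1(\R,(1+x^2)dx))$, and this space embeds compactly into $L^2(\R)$ by Lemma~\ref{le:comp}. Since in addition the time-translates are controlled in $H^{-3}(\R)$ by \eqref{eq:sup100}, an Aubin--Lions--Simon type compactness lemma gives that $(f_{\tau_k})_k$ is relatively compact in $L^2((0,t)\times\R)$; alternatively, and more elementarily, one interpolates: for a.e.\ $s$, $\|h_{\tau_k}(s)-h(s)\|_2^2 \le C\,\|h_{\tau_k}(s)-h(s)\|_{H^1}\,\|h_{\tau_k}(s)-h(s)\|_{H^{-3}}$ (a standard interpolation inequality, valid since $L^2$ sits between $H^{-3}$ and $H^1$), so that
\begin{equation*}
\int_0^t \|h_{\tau_k}(s)-h(s)\|_2^2\,ds \le C\left(\int_0^t \|h_{\tau_k}(s)-h(s)\|_{H^1}^2\,ds\right)^{1/2}\left(\int_0^t \|h_{\tau_k}(s)-h(s)\|_{H^{-3}}^2\,ds\right)^{1/2},
\end{equation*}
where $h$ stands for $f$ or $g$; the first factor is bounded by \eqref{e5} (note $h\in L^2(0,t;H^1)$ by weak lower semicontinuity), while the second factor tends to $0$ by \eqref{be0} and dominated convergence, using \eqref{eq:sup100} for the uniform bound needed to justify the latter. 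Hence $(f_{\tau_k},g_{\tau_k})\to(f,g)$ in $L^2((0,t)\times\R;\R^2)$, which is \eqref{be1}; extracting a further subsequence (and diagonalizing over $t\in\N$) yields the a.e.\ convergence \eqref{be1b}. The minor point that the integrals start at $\tau_k$ rather than $0$ is harmless since $\tau_k\to 0$ and the integrands are bounded in $L^1$ over the shrinking interval $(0,\tau_k)$ by the $L^\infty_t L^2_x$ bound.

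The main obstacle is the passage from weak $H^{-3}$ (or weak $L^2$) convergence to strong $L^2$ convergence, i.e.\ getting genuine compactness in both space and time simultaneously — this is exactly where the combination of the $H^1$-regularity estimate \eqref{e5} (coming from the second Liapunov functional $\mathcal H$ via Lemma~\ref{Unimin}) with the $H^{-3}$-equicontinuity \eqref{eq:sup100} is essential; without the former we would only have weak compactness, which is insufficient for the nonlinear terms in the Euler--Lagrange identities. The remaining bookkeeping — the diagonal extraction to cover all $t\ge 0$ and all $T>0$, the verification that the limit inherits $C([0,\infty);H^{-3})$ regularity, and the handling of the endpoint $\tau_k$ in the time integrals — is routine.
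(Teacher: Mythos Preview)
Your approach is essentially the paper's: a refined Arzel\`a--Ascoli argument (the paper cites \cite[Proposition~3.3.1]{AGS08}) for \eqref{be0}, followed by a compactness step combining the $L^2_t H^1_x$-bound \eqref{e5} with the $H^{-3}$-convergence to obtain \eqref{be1}. The paper invokes \cite[Lemma~9]{Si87} for the latter, which in this Hilbert setting is exactly your interpolation argument packaged as a black box, so the two routes coincide in substance.

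One correction, however: the interpolation inequality you wrote, $\|u\|_2^2 \le C\|u\|_{H^1}\|u\|_{H^{-3}}$, is false --- take $\hat u$ supported near frequency $n$ and note that the right-hand side decays like $n^{-2}$ while the left-hand side stays of order one. The correct interpolation $H^0=[H^{-3},H^1]_{3/4}$ gives $\|u\|_2 \le \|u\|_{H^1}^{3/4}\|u\|_{H^{-3}}^{1/4}$, hence $\|u\|_2^2 \le \|u\|_{H^1}^{3/2}\|u\|_{H^{-3}}^{1/2}$; applying H\"older in time with exponents $(4/3,4)$ instead of $(2,2)$ then makes your argument go through unchanged.
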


\begin{proof}
The proof relies on \cite[Proposition~3.3.1]{AGS08} and \cite[Lemma~9]{Si87}. 
Indeed, it first follows from \eqref{e1}, \eqref{e3}, \eqref{e4}, and Lemma~\ref{le:comp} that $(f_\tau(t))_{\tau\in (0,1)}$
 lies in a compact subset of $H^{-3}(\R)$. This fact, together with Lemma~\ref{le:sup3} and a refined version of the 
Arzel\`a-Ascoli theorem \cite[Proposition~3.3.1]{AGS08} ensures that  there are a sequence $(\tau_k)_{k\ge 1}$, $\tau_k\to 0$, 
and a function $f\in C([0,\infty);H^{-3}(\R))$ such that $\left( f_{\tau_k}(t) \right)$ converges towards $f(t)$ in $H^{-3}(\R;\R^2)$  for each $t\ge 0$. 
Since the same argument applies for $(g_\tau)_{\tau\in (0,1)}$, we have established \eqref{be0}. 
We then infer from \eqref{e3}, the embedding of $L^2(\R)$ in $H^{-3}(\R)$, the convergence \eqref{be0}, and the Lebesgue dominated convergence theorem that
\begin{equation}
\left( f_{\tau_k},g_{\tau_k} \right) \longrightarrow (f,g) \quad \text{ in } \quad  L^2(0,T;H^{-3}(\R;\R^2))
\;\;\text{ for all }\;\; T>0\,. \label{pim}
\end{equation} 
Now, given $\delta\in (0,1)$ and $T>1$, the estimates \eqref{e1}, \eqref{e3} (with $s=0$), \eqref{e4}, and \eqref{e5} in Lemma~\ref{L:Est} ensure that 
\begin{equation}
\left( f_{\tau_k},g_{\tau_k} \right)_{k\geq 1} \;\;\text{ is bounded in }\;\; L^2(\delta,T;H^1(\R)\cap L^1(\R,(1+x^2)\,dx))\,.\label{pam}
\end{equation}
Since $H^1(\R)\cap L^1(\R,(1+x^2)\,dx)$ is compactly embedded in $L^2(\R)$ by Lemma~\ref{le:comp} and $L^2(\R)$ is continuously embedded in $H^{-3}(\R)$, we are in a position to apply \cite[Lemma~9]{Si87} and deduce from \eqref{pim} and 
\eqref{pam} that $\left( f_{\tau_k},g_{\tau_k} \right)_{k\ge 1}$ converges towards $(f,g)$ in $L^2((\delta,T)\times \R;\R^2)$.
Owing to \eqref{e3}, this convergence may actually be improved to \eqref{be1}. 
The a.e. convergence \eqref{be1b} then follows from \eqref{be1} after possibly extracting a further subsequence.
\end{proof}

Finally, \eqref{e5} implies that, after possibly extracting a further subsequence, we may assume that
\begin{equation}
\left( \p_x f_{\tau_k}, \p_x g_{\tau_k} \right) \rightharpoonup (\p_x f, \p_x g) \quad \text{ in } \quad L^2((\delta,T)\times\R) \quad\text{for all }
\quad 0<\delta<T\,. \label{be2}
\end{equation}
Now, combining \eqref{e6}, \eqref{e7}, \eqref{be1}, and \eqref{be2},  we obtain
\begin{equation}
\left\{
\begin{array}{ccl}
\sqrt{f_{\tau_k}}\ \p_x\left[ (1+R)\ f_{\tau_k} + R\ g_{\tau_k} \right] & \rightharpoonup & \sqrt{f}\ \p_x\left[ (1+R)\ f + R\ g \right] \\
\sqrt{g_{\tau_k}}\ \p_x\left( f_{\tau_k} + g_{\tau_k} \right) & \rightharpoonup & \sqrt{g}\ \p_x\left( f + g \right) 
\end{array}
\right.
\quad \text{ in } \quad L^2((\delta,T)\times\R) \label{be3} 
\end{equation}
for $0<\delta<T,$ while \eqref{be1} and \eqref{be2} imply that,  for $0<\delta<T,$
\begin{equation}
\left\{
\begin{array}{ccl}
f_{\tau_k}\ \p_x\left[ (1+R)\ f_{\tau_k} + R\ g_{\tau_k} \right] & \rightharpoonup & f\ \p_x\left[ (1+R)\ f + R\ g \right] \\
g_{\tau_k}\ \p_x\left( f_{\tau_k} + g_{\tau_k} \right) & \rightharpoonup & g\ \p_x\left( f + g \right) 
\end{array}
\right.
\quad \text{ in } \quad L^1((\delta,T)\times\R)\,. \label{be4}
\end{equation}

\subsection{Passing to the limit}\label{se:pttl}

Combining the convergence \eqref{be0}  with the estimates \eqref{e1}, \eqref{e3} (with $s=0$) and \eqref{e4} in Lemma~\ref{L:Est} ensures that $(f(t),g(t))\in\cK_2$ for all $t\ge 0$. Moreover, gathering \eqref{e3}, \eqref{e5}, \eqref{be1}, and \eqref{be2}, we conclude that $(f,g)$ satisfies the  integrability properties $(i)$ of  Theorem~\ref{MT:1}. In addition, it follows from \eqref{be0} and Lemma~\ref{le:sup3} that 
\begin{equation}
\| f(t)-f(s)\|_{H^{-3}}  + \| g(t)-g(s)\|_{H^{-3}} \le C_2\ \sqrt{|t-s|}\,, \qquad (t,s)\in [0,\infty)\times [0,\infty)\,, \label{be5}
\end{equation} 
which proves assertion $(ii)$  of  Theorem~\ref{MT:1}.

In order to establish the estimate $(b)$ of Theorem~\ref{MT:1}, we pick $T>0$ and set $N_k:=[T/\tau_k]$ for all integers $k\geq1.$
Then, we infer from Corollary~\ref{co:sup1} and \eqref{eq:qee} (with $s=0$) that for all $k\geq 1$ we have
\begin{align*}
 &\frac{1}{2}\ \int_{\tau_k}^T \left\{ \left\| \sqrt{f_{\tau_k}(\sigma)}\ \p_x\left[(1+R)f_{\tau_k} + R g_{\tau_k} \right] (\sigma) \right\|_2^2 + R R_\mu \left\| \sqrt{g_{\tau_k}(\sigma)}\ \p_x[f_{\tau_k} + g_{\tau_k}] (\sigma) \right\|_2^2 \right\}\, d\sigma\\
&\leq\sum_{n=1}^{N_k} \left[ \frac{W_2^2(f_{\tau_k}^n,f_{\tau_k}^{n-1})}{2\tau_k} + \frac{R}{R_\mu} \frac{W_2^2(g_{\tau_k}^n,g_{\tau_k}^{n-1})}{2\tau_k} \right]
\leq\E(f_0,g_0)-\E(f_{\tau_k}(T), g_{\tau_k}(T)).
\end{align*}
Letting $k\to\infty,$ the convergences \eqref{be1} and \eqref{be3} lead us to 
\begin{align*}
& \frac{1}{2}\ \int_{\delta}^T \left\{ \left\| \sqrt{f(\sigma)}\ \p_x\left[(1+R)f + R g \right](\sigma) \right\|_2^2 + R R_\mu \left\| \sqrt{g(\sigma)}\ \p_x[f + g](\sigma) \right\|_2^2 \right\}\, d\sigma \\
& \hspace{2cm} \leq \E(f_0,g_0)-\E(f(T), g(T)).
\end{align*}
for all $\delta\in (0,1)$, whence the desired assertion $(b)$ of Theorem~\ref{MT:1} after letting $\delta\to 0$.

\medskip

Now, we identify the equations solved by $f$ and $g$. To this end, fix $\xi\in C_0^\infty(\R)$, $t\in (0,\infty)$, $s\in (0,t)$ and set $N:=[t/\tau]$ and $\nu:=[s/\tau]$. We infer from \eqref{Sumf}, \eqref{e1}, \eqref{e2}, and \eqref{e6} that
\begin{align*}
& \left| \int_\R (f_\tau(t)-f_\tau(s))\ \xi\, dx + \int_s^t \int_\R f_\tau(\sigma)\ \p_x\left[ (1+R)\ f_\tau + R\ g_\tau \right](\sigma)\ \p_x \xi\, dxd\sigma \right| \\
 &\le  \left| \int_\R (f_\tau(t)-f_0)\ \xi\, dx + \int_\tau^{(N+1)\tau} \int_\R f_\tau(\sigma)\ \p_x\left[ (1+R)\ f_\tau + 
R\ g_\tau \right](\sigma)\ \p_x \xi\, dxd\sigma \right| \\
 &\phantom{=}+ \left| \int_\R (f_\tau(s)-f_0)\ \xi\, dx + \int_\tau^{(\nu+1)\tau} \int_\R f_\tau(\sigma)\ \p_x\left[ (1+R)\ f_\tau + 
R\ g_\tau \right](\sigma)\ \p_x \xi\, dxd\sigma \right| \\
&\phantom{=}+ \left| \int_t^{(N+1)\tau} \int_\R f_\tau(\sigma)\ \p_x\left[ (1+R)\ f_\tau + R\ g_\tau \right](\sigma)\ \p_x \xi\, dxd\sigma \right| \\
  &\phantom{=}+ \left| \int_s^{(\nu+1)\tau} \int_\R f_\tau(\sigma)\ \p_x\left[ (1+R)\ f_\tau + R\ g_\tau \right](\sigma)\ \p_x \xi\, dxd\sigma \right| \\
 &\le  \|\p_x^2\xi\|_\infty\ \sum_{n=1}^N W_2^2(f_\tau^n,f_\tau^{n-1}) \\
 &\phantom{=}+ \|\p_x\xi\|_\infty\ \int_s^{(\nu+1)\tau}  \left\| f_\tau(\sigma) \right\|_1^{1/2}\ \left\| \sqrt{f_\tau}\ \p_x\left[ (1+R)\ f_\tau + R\ g_\tau \right](\sigma) \right\|_2\, d\sigma \\
  &\phantom{=}+ \|\p_x\xi\|_\infty\ \int_t^{(N+1)\tau}  \left\| f_\tau(\sigma) \right\|_1^{1/2}\ \left\| \sqrt{f_\tau}\ \p_x\left[ (1+R)\ f_\tau + R\ g_\tau \right](\sigma) \right\|_2\, d\sigma \\
 &\le  C\ \|\xi\|_{W^{2,\infty}}\ \left( \tau + \sqrt{\tau} \right)\,.
\end{align*} 
Taking $\tau=\tau_k$ in the above inequality and letting $k\to\infty$ with the 
help of \eqref{be0} and \eqref{be4}, we end up with the first identity in \eqref{T2}
$$
\int_\R (f(t)-f(s))\ \xi\, dx + \int_s^t \int_\R f(\sigma)\ \p_x\left[ (1+R)\ f + R\ g \right](\sigma)\ \p_x \xi\, dxd\sigma = 0\,.
$$
The proof of the second one being similar, it remains to check the property $(a)$ stated in Theorem~\ref{MT:1}. To this end, we first claim that
\begin{equation}
( f_{\tau_k} \ln{f_{\tau_k}} , g_{\tau_k} \ln{g_{\tau_k}} ) \longrightarrow (f\ln{f} , g\ln{g}) \quad \text{ in } \quad L^1((0,T)\times\R)\,, \quad T>0\,. \label{be9}
\end{equation}
Indeed, by \eqref{be1} and the continuity of $r\mapsto r\ln{r}$ in $[0,\infty)$, we have for $T>0$
\begin{equation}
( f_{\tau_k} \ln{f_{\tau_k}} , g_{\tau_k} \ln{g_{\tau_k}} ) \longrightarrow (f\ln{f} , g\ln{g}) \quad \text{  a.e. in $(0,T)\times\R$}. \label{be10}
\end{equation}
 Moreover, it readily follows from \eqref{e3} (with $s=0$) that
\begin{equation}
( f_{\tau_k} \ln{f_{\tau_k}} , g_{\tau_k} \ln{g_{\tau_k}} )_{k\ge 1} \;\;\mbox{ is uniformly integrable in }\;\; L^1((0,T)\times\R;\R^2)\,, \label{be11}
\end{equation}  
while \eqref{e3}, \eqref{e4}, and the inequality $|r\ln{r}| \le 2 \sqrt{r}\ \max{\{r,1\}}$, $r\ge 0$, guarantee that, for $R>1$,
\begin{eqnarray}
\int_0^T \int_{\{|x|\ge R\}} |f_{\tau_k} \ln{f_{\tau_k}}|\, dxdt  & \le & 2\ \int_0^T \int_{\{|x|\ge R\}} \sqrt{f_{\tau_k}}\ \mathbf{1}_{[0,1]}(f_{\tau_k})\, dxdt \nonumber\\
& + & 2\ \int_0^T \int_{\{|x|\ge R\}} f_{\tau_k}^{3/2}\ \mathbf{1}_{(1,R)}(f_{\tau_k})\, dxdt \nonumber\\
& + & 2\ \int_0^T \int_{\{|x|\ge R\}} f_{\tau_k}^{3/2}\ \mathbf{1}_{[R,\infty)}(f_{\tau_k})\, dxdt \nonumber\\
& \le &  2\ \left( \int_0^T \int_{\{|x|\ge R\}} x^2\ f_{\tau_k}\, dxdt \right)^{1/2}\ \left( \int_0^T \int_{\{|x|\ge R\}} \frac{dxdt}{x^2}  \right)^{1/2} \nonumber\\ 
& + & 2\sqrt{R}\ \int_0^T \int_{\{|x|\ge R\}} f_{\tau_k}\, dxdt + \frac{2}{\sqrt{R}}\ \int_0^T \int_{\{|x|\ge R\}} f_{\tau_k}^2\, dxdt \nonumber\\ 
& \le & C\ \frac{1+T}{\sqrt{R}} + \frac{2}{R^{3/2}}\ \int_0^T \int_{\{|x|\ge R\}} x^2\ f_{\tau_k}\, dxdt + C\ \frac{1+T}{\sqrt{R}} \nonumber \\
& \le & C\ \frac{1+T}{\sqrt{R}}\,. \label{be12}
\end{eqnarray}
Due to \eqref{be10}-\eqref{be12}, we are in a position to apply Vitali's convergence theorem  (see, e.g., \cite[Theorem~2.24]{FL07} or \cite[Th\'eor\`eme~I.4.13]{Ka93}) and deduce the claim \eqref{be9} for $(f_{\tau_k})_{k\ge 1}$, the proof for $(g_{\tau_k})_{k\ge 1}$ being identical. Consequently, after possibly extracting a subsequence, we have also
\begin{equation}
(H(f_{\tau_k}(t)),H(g_{\tau_k}(t))) \longrightarrow (H(f(t)),H(g(t))) \quad \text{ a.e. in } \quad (0,\infty)\,, \label{be13}
\end{equation}
the functional $H$ being defined in \eqref{eq:H}. We next infer from \eqref{be2} and the Fatou lemma that, for $t>0$, 
\begin{eqnarray}
& & \int_0^t \left( \|\p_x f(s)\|_2^2 + R\ \|\p_x (f+g)(s)\|_2^2 \right)\, ds = \lim_{\delta\to 0} \int_\delta^t \left( \|\p_x f(s)\|_2^2 + R\ \|\p_x (f+g)(s)\|_2^2 \right)\, ds \nonumber\\
& &\le  \lim_{\delta\to 0}\ \liminf_{k\to\infty} \int_\delta^t \left( \|\p_x f_{\tau_k}(s)\|_2^2 + R\ \|\p_x (f_{\tau_k}+g_{\tau_k})(s)\|_2^2 \right)\, ds\,. \label{be8}
\end{eqnarray}
Owing to \eqref{be13} and \eqref{be8}, we may pass to the limit as $k\to\infty$ in \eqref{Hes} to obtain the assertion $(a)$ of Theorem~\ref{MT:1}, which completes its proof.

\section*{Acknowledgements} 

PhL warmly thanks Adrien Blanchet and Giuseppe Savar\'e for helpful discussions on the optimal transport approach to partial differential equations. This work was initiated during a visit of BVM at the Institut de Math{\'e}matiques de Toulouse, Universit\'e Paul Sabatier - Toulouse~3. He is grateful for the hospitality.

\appendix
\section{Some technical results}

We first collect some well-known properties of the functional $H$ defined by \eqref{eq:H}.

\begin{lemma}\label{le:ap1}
Let $h$ be a nonnegative function in $L^1(\R,(1+x^2) dx)\cap L^2(\R)$. Then $h\ln{h}\in L^1(\R)$ and there is a positive constant $C_\ell$ such that
\begin{eqnarray}
\int_\R h(x)\ |\ln{h(x)}|\, dx & \le & C_\ell + \int_\R h(x)\ \left( 1 + x^2 \right)\ dx + \|h\|_2^2\,, \label{eq:ap1} \\
H(h) & \ge & - C_\ell - \int_\R h(x)\ \left( 1 + x^2 \right)\ dx\,. \label{eq:ap2}
\end{eqnarray}
\end{lemma}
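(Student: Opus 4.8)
The plan is to split the integral of $h\ln h$ over the two regions $\{h\ge 1\}$ and $\{0<h<1\}$ and to control each contribution by an elementary pointwise inequality, all constants being absolute. I first recall that $r\mapsto r\ln r$ extends continuously to $[0,\infty)$, with value $0$ at $r=0$, so that $h\ln h$ is a well-defined measurable function on $\R$.

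On the set $\{h\ge 1\}$ the concavity of the logarithm gives $0\le\ln h\le h-1\le h$, hence $0\le h\ln h\le h^2$ there, so that $\int_{\{h\ge 1\}} h\ln h\,dx\le \|h\|_2^2$. On the set $\{0<h<1\}$ the integrand $-h\ln h$ is nonnegative and the crucial estimate is $-r\ln r\le (2/e)\,\sqrt r$ for every $r\in(0,1)$, which follows by maximising $r\mapsto -\sqrt r\,\ln r$ over $(0,1)$, the maximum $2/e$ being attained at $r=e^{-2}$. Consequently $\int_{\{0<h<1\}}(-h\ln h)\,dx\le (2/e)\int_\R\sqrt h\,dx$, and writing $\sqrt h=\sqrt{h(1+x^2)}\,(1+x^2)^{-1/2}$, the Cauchy--Schwarz inequality together with $\int_\R(1+x^2)^{-1}\,dx=\pi$ yields $\int_\R\sqrt h\,dx\le\sqrt\pi\,\big(\int_\R h(1+x^2)\,dx\big)^{1/2}$. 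A final use of Young's inequality, in the form $\alpha\sqrt t\le \lambda\alpha^2/2+t/(2\lambda)$ with a suitable choice of $\lambda$, converts the right-hand side into $C_\ell+\int_\R h(1+x^2)\,dx$ for an absolute constant $C_\ell>0$. Adding the two regions gives \eqref{eq:ap1}; since its right-hand side is finite, $h\ln h\in L^1(\R)$.

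For \eqref{eq:ap2} it then suffices to write $H(h)=\int_{\{h\ge 1\}}h\ln h\,dx+\int_{\{0<h<1\}}h\ln h\,dx$ and to discard the first, nonnegative, term, so that $H(h)\ge -\int_{\{0<h<1\}}(-h\ln h)\,dx$; the bound just obtained for this last integral gives $H(h)\ge -C_\ell-\int_\R h(1+x^2)\,dx$.

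I do not anticipate any genuine obstacle: this is the standard coercivity property of the Boltzmann entropy under a quadratic confinement, and every step is elementary. The only point deserving a little care is the bookkeeping of constants so that the coefficients of $\int_\R h(1+x^2)\,dx$ and $\|h\|_2^2$ in \eqref{eq:ap1} are exactly $1$; this is arranged by the choice of $\lambda$ in Young's inequality (taking $\lambda=1/2$ works, with $C_\ell=\pi/(2e^2)$), and in any case one may simply enlarge $C_\ell$ to absorb any stray numerical factor.
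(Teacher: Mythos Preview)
Your argument is correct. The only blemish is a harmless arithmetic slip in the value of $C_\ell$: with $\lambda=1/2$ and $\alpha=2\sqrt{\pi}/e$ one gets $\lambda\alpha^2/2=\pi/e^2$, not $\pi/(2e^2)$; as you yourself note, this is irrelevant since $C_\ell$ can always be enlarged.

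Your route differs from the paper's. The paper introduces the weight $\omega(x):=e^{-(1+x^2)}$ and performs a \emph{three}-region split: on $\{h<\omega\}$ it uses the monotonicity of $r\mapsto r|\ln r|$ on $[0,1/e]$ to bound $h|\ln h|$ by $\omega|\ln\omega|=(1+x^2)e^{-(1+x^2)}$, whose integral over $\R$ gives the constant $C_\ell$; on $\{\omega\le h\le 1\}$ it observes directly that $|\ln h|\le|\ln\omega|=1+x^2$, so $h|\ln h|\le h(1+x^2)$; and on $\{h>1\}$ it uses the same bound $h\ln h\le h^2$ as you do. You instead keep a two-region split and control the whole range $\{0<h<1\}$ via the pointwise inequality $-r\ln r\le(2/e)\sqrt r$, followed by Cauchy--Schwarz against $(1+x^2)^{-1/2}$ and Young's inequality. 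The paper's decomposition has the advantage that the coefficients $1$ in front of $\int h(1+x^2)\,dx$ and $\|h\|_2^2$ appear directly, without any final balancing step; your approach avoids introducing the auxiliary Gaussian weight at the cost of an extra application of Young's inequality. Both are standard and equally elementary.
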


\begin{proof}
Introducing the function $\omega(x):= e^{-(1+x^2)}$, $x\in\R$,  and using the monotonicity of $r\mapsto r|\ln r|$ in $[0,1/e],$ we have
\begin{eqnarray*}
\int_\R h(x) |\ln{h(x)}|\, dx & = & \int_{\{h(x)<\omega(x)\}} h(x) |\ln{h(x)}|\, dx + \int_{\{\omega(x) \le h(x) \le 1\}} h(x) |\ln{h(x)}|\, dx \\
& & + \int_{\{h(x)>1\}}h(x) |\ln{h(x)}|\, dx \\
& \le & \int_{\{h(x)<\omega(x)\}} e^{-(1+|x|^2)}\ (1+x^2)\, dx + \int_{\{\omega(x) \le h(x) \le 1\}} h(x) (1+x^2)\, dx \\
&& +  \int_{\{h(x)>1\}} h(x) (h(x)-1)\, dx \\ 
& \le & \int_\R e^{-(1+|x|^2)}\ (1+x^2)\, dx + \int_\R h(x) (1+x^2)\, dx + \|h\|_2^2\,,
\end{eqnarray*}
whence \eqref{eq:ap1}. Similarly,
\begin{eqnarray*}
H(h) & \ge & \int_{\{h(x)<\omega(x)\}} h(x) \ln{h(x)}\, dx + \int_{\{\omega(x) \le h(x) \le 1\}} h(x) \ln{h(x)}\, dx \\
& \ge & - \int_{\{h(x)<\omega(x)\}} e^{-(1+|x|^2)}\ (1+x^2)\, dx - \int_{\{\omega(x) \le h(x) \le 1\}} h(x) (1+x^2)\, dx\,,
\end{eqnarray*}
from which \eqref{eq:ap2} readily follows.
\end{proof}

The next results  allowed us to identify the limit of some terms arising in  the derivation of the Euler-Lagrange equation in Lemma~\ref{le:sup1}.

\begin{lemma}\label{L:1}
Consider $h\in H^1(\R)$ and $\zeta\in C_0^\infty(\R)$. Setting $\zeta_\e := \id + \e\ \zeta$ for $\e>0$, we have 
\begin{equation}\label{1}
h\circ \zeta_\e \mathop{\longrightarrow}_{\e\to 0} h \quad\text{in}\quad L^2(\R) \qquad\text{and}\qquad \frac{h\circ \zeta_{\e} -h}{\e} \mathop{\rightharpoonup}_{\e\to 0} \zeta \p_x h \quad \text{in}\quad L^2(\R). 
\end{equation}
\end{lemma}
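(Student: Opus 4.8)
The plan is to prove both convergences in Lemma~\ref{L:1} by first establishing them for smooth compactly supported $h$ and then extending by density, using the uniform (in $\e$) boundedness of the relevant linear operators on $L^2(\R)$.

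First I would record the basic change-of-variables estimate. Since $\zeta\in C_0^\infty(\R)$, for $\e$ small enough $\zeta_\e=\id+\e\zeta$ is a $C^\infty$-diffeomorphism of $\R$ with $\p_x\zeta_\e=1+\e\p_x\zeta$ bounded above and below by positive constants uniformly in $\e\in(0,\e_0]$; hence $\zeta_\e^{-1}$ is well-defined with derivative bounded in the same way. The substitution $y=\zeta_\e(x)$ then gives, for any $w\in L^2(\R)$,
\begin{equation*}
\|w\circ\zeta_\e\|_2^2=\int_\R \frac{|w(y)|^2}{\p_x\zeta_\e(\zeta_\e^{-1}(y))}\,dy\le \frac{1}{1-\e_0\|\p_x\zeta\|_\infty}\,\|w\|_2^2\,,
\end{equation*}
so the family of composition operators $w\mapsto w\circ\zeta_\e$ is uniformly bounded on $L^2(\R)$.

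Next I would prove the first convergence. For $h\in C_0^\infty(\R)$, $\zeta_\e\to\id$ uniformly on compact sets and $\p_x\zeta_\e\to1$ uniformly, so $h\circ\zeta_\e\to h$ pointwise with supports contained in a fixed compact set and uniformly bounded; dominated convergence gives $h\circ\zeta_\e\to h$ in $L^2(\R)$. For general $h\in H^1(\R)$, pick $h_k\in C_0^\infty(\R)$ with $h_k\to h$ in $L^2(\R)$; writing $h\circ\zeta_\e-h=(h-h_k)\circ\zeta_\e+(h_k\circ\zeta_\e-h_k)+(h_k-h)$ and using the uniform bound from the previous paragraph on the first term yields $\limsup_{\e\to0}\|h\circ\zeta_\e-h\|_2\le C\|h-h_k\|_2$, which tends to $0$ as $k\to\infty$.

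For the second convergence I would argue similarly but with the difference quotient operator $L_\e h:=(h\circ\zeta_{\e}-h)/\e$. For $h\in C_0^\infty(\R)$ one has the exact identity $h(\zeta_\e(x))-h(x)=\int_0^1 \p_x h(x+s\e\zeta(x))\,\e\zeta(x)\,ds$, so $L_\e h(x)=\zeta(x)\int_0^1\p_x h(x+s\e\zeta(x))\,ds\to \zeta(x)\p_x h(x)$ pointwise, again with supports in a fixed compact set and uniformly bounded, hence $L_\e h\to\zeta\,\p_x h$ strongly in $L^2(\R)$, which implies weak convergence. It remains to control $L_\e$ uniformly on $H^1(\R)$: for $h\in H^1(\R)$ the same integral representation (valid a.e.\ by approximation) gives $\|L_\e h\|_2\le\|\zeta\|_\infty\,\|\p_x h\|_2$, uniformly in $\e$. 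Now for general $h\in H^1(\R)$ take $h_k\in C_0^\infty(\R)$ with $h_k\to h$ in $H^1(\R)$, and for any $\phi\in L^2(\R)$ write
\begin{equation*}
\langle L_\e h-\zeta\,\p_x h,\phi\rangle=\langle L_\e(h-h_k),\phi\rangle+\langle L_\e h_k-\zeta\,\p_x h_k,\phi\rangle+\langle \zeta\,\p_x(h_k-h),\phi\rangle\,;
\end{equation*}
the first and third terms are bounded by $C\|h-h_k\|_{H^1}\|\phi\|_2$ and the middle term tends to $0$ as $\e\to0$ for fixed $k$, so letting $\e\to0$ and then $k\to\infty$ gives $L_\e h\rightharpoonup\zeta\,\p_x h$ in $L^2(\R)$.

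The only mildly delicate point is justifying the integral representation of $L_\e h$ and the bound $\|L_\e h\|_2\le\|\zeta\|_\infty\|\p_x h\|_2$ for $h\in H^1(\R)$ rather than just $C_0^\infty(\R)$; this is handled by approximating $h$ in $H^1$ and passing to the limit in both sides, using that translations are continuous on $L^2$. Everything else is a routine density argument resting on the uniform operator bounds established at the outset, and no genuine obstacle is expected.
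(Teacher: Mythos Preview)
Your proof is correct but follows a genuinely different route from the paper's. You argue throughout by density of $C_0^\infty(\R)$ combined with uniform operator bounds: a change-of-variables bound on the composition operator for the first convergence, and the uniform estimate $\|L_\e h\|_2\le C\|\p_x h\|_2$ for the difference quotient in the second. The paper instead works directly with a general $h\in H^1(\R)$: writing $h(\zeta_\e(x))-h(x)=\int_x^{\zeta_\e(x)}\p_x h(y)\,dy$ and applying Cauchy--Schwarz followed by Fubini, it obtains the quantitative bound $\|h\circ\zeta_\e-h\|_2\le \e\,\|\zeta\|_\infty\,\|\p_x h\|_2$, which simultaneously gives the first convergence and the $L^2$-boundedness of $L_\e h$; for the weak convergence it then observes that, since almost every $x$ is a Lebesgue point of $\p_x h$, one has $L_\e h\to\zeta\,\p_x h$ a.e., and a.e.\ convergence together with $L^2$-boundedness yields weak convergence. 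Your density approach has the advantage of avoiding the Lebesgue-point argument and, as a byproduct, actually establishes \emph{strong} $L^2$-convergence of $L_\e h$ to $\zeta\,\p_x h$ (since your three-term decomposition works with norms, not only pairings). The paper's approach is shorter, avoids any approximation step, and delivers the explicit constant $\|\zeta\|_\infty\|\p_x h\|_2$ in one stroke.
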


\begin{proof}
 Since $\zeta_\e$ is a $C^\infty-$diffeomorphism from $\R$ onto $\R$ for $\e$ small enough, its inverse $\zeta_\e^{-1}$ is well-defined and satisfies
\begin{equation}
\left| x - \zeta_\e^{-1}(x) \right| \le \e\ \|\zeta\|_\infty\,, \qquad x\in\R. \label{eq:ap3}
\end{equation}
It follows from the Cauchy-Schwarz inequality, the Fubini theorem, and \eqref{eq:ap3} that
\begin{eqnarray*}
\|h\circ \zeta_\e - h \|_2^2 & = & \int_\R \left( \int_x^{\zeta_\e(x)} \p_x h(y)\, dy \right)^2\, dx \le \int_\R \left| x-\zeta_\e(x) \right|\ \left| \int_x^{\zeta_\e(x)} |\p_x h(y)|^2\, dy \right|\, dx \\ 
& \le & \e\ \|\zeta\|_\infty \int_\R |\p_x h(y)|^2\ \left| y - \zeta_\e^{-1}(y) \right|\, dy \le \e^2\ \|\zeta\|_\infty^2\ \|\p_x h\|_2^2\,,
\end{eqnarray*}
which gives the first assertion in \eqref{1} and the boundedness of $\left( (h\circ \zeta_\e-h)/\e \right)_\e$ in $L^2(\R)$. Next, since $h\in H^1(\R)$, almost every $x\in\R$ is a Lebesgue point for $\p_x h$ and, for such points,
$$
\frac{h(x+\e\zeta(x))-h(x)}{\e} = \frac{1}{\e\zeta(x)}\ \int_x^{x+\e\zeta(x)} \p_x h(y)\, dy\ \zeta(x) \mathop{\longrightarrow}_{\e\to 0} \p_x h(x)\ \zeta(x)\,.
$$
Therefore, $\left( (h\circ \zeta_{\e} -h)/\e \right)_\e$ converges a.e. to $\zeta \p_x h$ as $\e\to 0$ and is bounded in $L^2(\R)$, and the second assertion in \eqref{1} readily follows from these two facts. 
\end{proof}

 The first assertion of Lemma~\ref{L:1} is actually true in a more general setting:
\begin{lemma}\label{le:ap2}
Consider $h\in H^1(\R)$ and a sequence $(\zeta_\e)_{\e>0}$ of functions in $C_0^\infty(\R)$ such that $\omega_\e:= \|\zeta_\e- \id\|_\infty\longrightarrow 0$ as $\e\to 0$. Then
$$
h\circ \zeta_\e \mathop{\longrightarrow}_{\e\to 0} h \quad\text{in}\quad L^2(\R)\,. 
$$
\end{lemma}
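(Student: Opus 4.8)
\textbf{Proof plan for Lemma~\ref{le:ap2}.} The plan is to mimic the first computation in the proof of Lemma~\ref{L:1}, but to handle carefully the fact that the perturbations $\zeta_\e$ are now arbitrary functions with small sup-distance to the identity, rather than of the special affine-in-$\e$ form $\id+\e\zeta$. The starting observation is that, for $\e$ small enough, there is nothing forcing $\zeta_\e$ to be a diffeomorphism, so the change-of-variables trick used in Lemma~\ref{L:1} (which relied on $\zeta_\e^{-1}$) is not directly available. I would therefore first reduce to the diffeomorphism case by density: given $\delta>0$, pick $\tilde h\in C_0^\infty(\R)$ with $\|\tilde h-h\|_{H^1}\le\delta$, and split
\[
\|h\circ\zeta_\e - h\|_2 \le \|(h-\tilde h)\circ\zeta_\e\|_2 + \|\tilde h\circ\zeta_\e - \tilde h\|_2 + \|\tilde h - h\|_2.
\]
The last term is $\le\delta$. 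For the middle term, since $\tilde h$ is Lipschitz with compact support and $\|\zeta_\e-\id\|_\infty=\omega_\e\to 0$, one has $|\tilde h(\zeta_\e(x))-\tilde h(x)|\le \mathrm{Lip}(\tilde h)\,\omega_\e$ pointwise, and the integrand is supported in a fixed bounded set (the $\omega_\e$-neighbourhood of $\mathrm{supp}\,\tilde h$, which is contained in a fixed compact set for $\e$ small), so this term tends to $0$ as $\e\to0$.

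The genuine obstacle is the first term $\|(h-\tilde h)\circ\zeta_\e\|_2$, because without controlling the Jacobian of $\zeta_\e$ from below we cannot bound $\int_\R |(h-\tilde h)(\zeta_\e(x))|^2\,dx$ by a multiple of $\|h-\tilde h\|_2^2$. To circumvent this I would \emph{not} estimate $(h-\tilde h)\circ\zeta_\e$ in $L^2$ directly, but instead keep the $H^1$-difference inside a telescoping argument: write
\[
h\circ\zeta_\e - h = \int_0^1 \frac{d}{ds}\big[h\circ(\id + s(\zeta_\e-\id))\big]\,ds = \int_0^1 (\zeta_\e - \id)\,(\p_x h)\circ(\id + s(\zeta_\e-\id))\,ds,
\]
valid first for $h\in C_0^\infty$ and then for $h\in H^1$ by approximation, \emph{provided} each map $\id + s(\zeta_\e-\id)$ is a diffeomorphism with Jacobian bounded below uniformly in $s\in[0,1]$. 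This last proviso is exactly what is available here: each $\zeta_\e$ lies in $C_0^\infty(\R)$, hence is a fixed smooth function, and for the purpose of Lemma~\ref{le:sup1} it is applied with $\zeta_\e\in\{T_\e, S_\e, T_\e^{-1}\circ S_\e\}$, all of which \emph{are} $C^\infty$-diffeomorphisms of $\R$ for $\e\le\e_0$, with derivatives uniformly close to $1$; the convex combinations $\id + s(\zeta_\e-\id)$ then also have derivative uniformly close to $1$, so are diffeomorphisms with Jacobian bounded below by, say, $1/2$, uniformly in $(s,\e)$ with $\e$ small.

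Granting this, I would finish as follows: using the Cauchy--Schwarz inequality in $s$, Fubini, the change of variables $y = (\id + s(\zeta_\e-\id))(x)$ (whose Jacobian is $\ge 1/2$), and $\|\zeta_\e-\id\|_\infty=\omega_\e$, one gets
\[
\|h\circ\zeta_\e - h\|_2^2 \le \int_0^1\!\!\int_\R |\zeta_\e(x)-x|^2\,\big|(\p_x h)(\id+s(\zeta_\e-\id))(x)\big|^2\,dx\,ds \le 2\,\omega_\e^2\,\|\p_x h\|_2^2,
\]
which tends to $0$ as $\e\to0$ since $\omega_\e\to0$ and $h\in H^1(\R)$; this proves the claim. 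The one point requiring care, and which I expect to be the main technical nuisance, is the justification that the chain-rule identity and the change of variables survive the passage from $C_0^\infty$ data to $H^1$ data — this is a routine approximation argument once the uniform lower bound on the Jacobians of the interpolating diffeomorphisms is in hand, and that lower bound is guaranteed, in every instance where the lemma is invoked, by the construction \eqref{diff} of $T_\e$ and $S_\e$ together with the uniform convergence of $\p_x T_\e$ and $\p_x S_\e$ to $1$.
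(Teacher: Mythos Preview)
Your approach is more elaborate than necessary and, as you yourself acknowledge, does not prove the lemma as stated: it requires a uniform lower bound on the derivatives of the interpolating maps $\id + s(\zeta_\e - \id)$, which is a hypothesis the lemma does not supply (only $\|\zeta_\e - \id\|_\infty \to 0$ is assumed, with no control whatsoever on $\zeta_\e'$). Appealing to ``in every instance where the lemma is invoked, this bound holds'' salvages the applications but not the lemma itself.

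The paper's proof bypasses all of this with a direct computation that needs neither a change of variables nor an inverse map. The key observation you missed is that the Fubini step from the proof of Lemma~\ref{L:1} can be made robust simply by crudely enlarging the inner domain of integration: since $|\zeta_\e(x)-x|\le\omega_\e$, the segment joining $x$ and $\zeta_\e(x)$ lies in $[x-\omega_\e,x+\omega_\e]$. Hence, from the fundamental theorem of calculus and Cauchy--Schwarz,
\[
\|h\circ\zeta_\e - h\|_2^2 \le \int_\R |x-\zeta_\e(x)|\,\left|\int_x^{\zeta_\e(x)} |\p_x h(y)|^2\,dy\right|\,dx \le \omega_\e \int_\R \int_{x-\omega_\e}^{x+\omega_\e} |\p_x h(y)|^2\,dy\,dx,
\]
and Fubini on the right-hand side (for fixed $y$, the set of admissible $x$ is $[y-\omega_\e,y+\omega_\e]$) gives $2\omega_\e^2\,\|\p_x h\|_2^2$ directly. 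No diffeomorphism assumption, no Jacobian bound, no density argument is needed.
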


\begin{proof}
As in the proof of Lemma~\ref{L:1}, it follows from the Cauchy-Schwarz inequality and Fubini's theorem that
\begin{eqnarray*}
\|h\circ \zeta_\e - h \|_2^2 & \le & \int_\R \left| x-\zeta_\e(x) \right|\ \left| \int_x^{\zeta_\e(x)} |\p_x h(y)|^2\, dy \right|\, dx  \le  \omega_\e\ \int_\R \int_{x-\omega_\e}^{x+\omega_\e} |\p_x h(y)|^2\, dydx \\
&\le& 2\ \omega_\e^2\ \|\p_x h\|_2^2\,,
\end{eqnarray*}
and the right-hand side of the above inequality converges to zero as $\e\to 0.$
\end{proof}

\bibliographystyle{abbrv}
\bibliography{BMPhL}

\end{document}